\newtheorem{myproposition}{Proposition}[section]
\newtheorem{mytheorem}[myproposition]{Theorem}
\newtheorem{mylemma}[myproposition]{Lemma}
\newtheorem{mycorollary}[myproposition]{Corollary}
\newtheorem{myobservation}[myproposition]{Observation}
\newtheorem{myproblem}[myproposition]{Problem}
\def\imod#1{\allowbreak\mkern10mu({\operator@font mod}\,\,#1)}
\begin{document}

\begin{frontmatter}

\title{Spectra of Graphs and Closed Distance Magic Labelings}

\author[inst1]{Marcin Anholcer}
\author[inst2]{Sylwia Cichacz\fnref{fn1}}
\author[inst3]{Iztok Peterin\fnref{fn2}}

\address[inst1]{
Pozna\'n University of Economics, Faculty of Informatics and Electronic Economy, Al. Niepodleg{\l}o\'sci 10, 61-875 Pozna\'n, Poland, \textit{m.anholcer@ue.poznan.pl}
}
\address[inst2]{
AGH University of Science and Technology, Faculty of Applied Mathematics, Al. Mickiewicza 30, 30-059 Krak\'ow, Poland, \textit{cichacz@agh.edu.pl}
}
\address[inst3]{
University of Maribor, Faculty of Electrical Engineering and Computer Science, Smetanova 17, 2000 Maribor, Slovenia, \textit{iztok.peterin@um.si}
}

\fntext[fn1]{The author was partially supported by National Science Centre grant nr 2011/01/D/ST1/04104, as well as by the Polish Ministry of Science and Higher Education.}
\fntext[fn2]{The author is also with IMFM, Jadranska 19, 1000 Ljubljana, Slovenia and his work is partially supported by grant P1-0297 of Ministry of Education of Slovenia.}

\begin{abstract}
Let $G=(V,E)$ be a graph of order $n$. A closed distance magic labeling
of $G$ is a bijection $\ell \colon V(G)\rightarrow \{1,\ldots ,n\}$ for
which there exists a positive integer $k$ such that
$\sum_{x\in N[v]}\ell (x)=k$ for all $v\in V $, where $N[v]$ is the
closed neighborhood of $v$. We consider the closed distance magic graphs
in the algebraic context. In particular we analyze the relations between
the closed distance magic labelings and the spectra of graphs. These
results are then applied to the strong product of graphs with complete
graph or cycle and to the circulant graphs. We end with a number theoretic
problem whose solution results in another family of closed distance magic
graphs somewhat related to the strong product.
\end{abstract}

\begin{keyword}
closed distance magic graphs, graph spectrum, strong product of graphs
\MSC[2010] 05C78, 05C50, 05C76
\end{keyword}
\end{frontmatter}

\section{Introduction and preliminaries}

All graphs considered in this paper are simple finite graphs. For a graph $G$, we use $V(G)$ for the vertex set and $E(G)$ for the edge set of $G$. The \textit{open neighborhood} $N(x)$ (or more precisely $N_{G}(x)$, when needed) of a vertex $x$ is the set of all vertices adjacent to $x$, and the \textit{degree} $d(x)$ of $x$ is $|N(x)|$, i.e. the size of the neighborhood of $x$. By $N[x]$ (or $N_{G}[x]$) we denote the \textit{closed neighborhood} $N(x)\cup\{x\}$ of $x$. By $C_{n}$ we denote a cycle on $n$ vertices.\\

Different kinds of labelings have been important part of graph theory. See a dynamic survey \cite{Ga} which covers the field. One type of labelings includes magic labelings, where some elements (edges, vertices, etc.) of a graph must be labeled in such a way that certain sums (depending on graph properties) are constant. \emph{Closed distance magic labeling} (also called $\Sigma ^{\prime }$ -\textit{labeling}, see \cite{Be}) of a graph $G=(V(G),E(G))$ of order $n$ is a bijection $\ell\colon V(G)\rightarrow \{1,\ldots ,n\}$ with the property that there is a positive integer $k^{\prime }$ (called the \emph{magic constant}) such that $w(x)=\sum_{y\in N_{G}[x]}\ell (y)=k^{\prime }$ for every $x\in V(G)$, where $w(x)$ is the \emph{weight} of $x$. If a graph $G$ admits a closed distance magic labeling, then we say that $G$ is \emph{closed distance magic graph}. Closed distance magic graphs are an analogue to distance magic graphs, where the sums are taken over the open neighborhoods $N_{G}(x)$ instead of the closed ones $N_{G}[x]$, see \cite{AFK,Cic,CicFro}.

Let $D$ be a subset of non-negative integers. O'Neal and Slater in \cite{ONSl2} have defined the $D$-distance magic labeling as a bijection $f:V(G)\rightarrow \{1,\ldots,n\}$ such that there is a magic constant $k$ such that for any vertex $x\in V(G)$, $w(x)=\sum_{y\in N_D(x)}{f(y)}=k$. Here $N_D(x)=\{y\in V(G)|d(x,y)\in D\}$, i.e., the weight of a vertex $x\in V(G)$ is the sum of the labels of all the vertices $y\in V(G)$ for which their distance to $x$ belongs to $D$. This kind of labeling has been studied e.g. by Simanjuntak et al. in \cite{ref_SimElv} (we refer to some of their results in one of the following sections). It is a generalization of both distance magic labeling and closed distance magic labeling, where $D=\{1\}$ and $D=\{0,1\}$, respectively.

The concept of distance magic labeling has been motivated by the construction of magic rectangles. Magic rectangles are natural generalization of magic squares that has been intriguing mathematicians and the general public for a long time \cite{Hag}. A magic $(m,n)$-rectangle $S$ is an $m\times n$ array in which the first $mn$ positive integers are placed so that the sum over each row of $S$ is constant and the sum over each column of $S$ is another (different if $m\neq n$) constant. Harmuth proved that:

\begin{mytheorem}[\protect\cite{Har1,Har2}]
\label{rectangles} For $m, n>1$ there is a magic $(m, n)$-rectangle $S$ if and only if $m \equiv n \, ({\rm mod}\, 2)$ and $(m, n) \neq (2, 2)$.
\end{mytheorem}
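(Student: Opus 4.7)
The plan is to prove necessity by a direct averaging argument on the forced line sums, and sufficiency by explicit constructions split according to the common parity of $m$ and $n$.

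For the ``only if'' direction, I would first compute the forced line sums from the total $\sum_{i=1}^{mn} i = \tfrac{mn(mn+1)}{2}$: each row must sum to $R=\tfrac{n(mn+1)}{2}$ and each column to $C=\tfrac{m(mn+1)}{2}$. If $m\not\equiv n\pmod 2$, then $mn+1$ is odd, so exactly one of $n(mn+1)$ and $m(mn+1)$ is odd, which forces the corresponding line sum to be a half-integer, a contradiction. The case $(m,n)=(2,2)$ is excluded by a small case check: both forced sums equal $5$, and fixing the cell containing $1$ already forces a repetition among $\{2,3,4\}$.

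For the ``if'' direction with both $m$ and $n$ odd, I would use a Kronecker-type composition. I would select two auxiliary arrays $A=(a_{ij})$ with entries from $\{0,\dots,n-1\}$ and $B=(b_{ij})$ with entries from $\{0,\dots,m-1\}$, each having constant row and column sums, and such that $(i,j)\mapsto(b_{ij},a_{ij})$ is a bijection onto $\{0,\dots,m-1\}\times\{0,\dots,n-1\}$; then $S_{ij}=nb_{ij}+a_{ij}+1$ is a magic rectangle. Suitable $A$ and $B$ can be built from cyclic Latin rectangles whenever $m,n\ge 3$ are odd. For both $m,n$ even with $(m,n)\ne(2,2)$, I would instead use a complementary-pair argument: partition $\{1,\dots,mn\}$ into the pairs $\{k,\,mn+1-k\}$, each summing to $mn+1$, and place them so that every row receives exactly $n/2$ pairs and every column exactly $m/2$ pairs; the row and column sums then come out to $\tfrac{n(mn+1)}{2}$ and $\tfrac{m(mn+1)}{2}$ automatically.

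The main obstacle will be the borderline cases where the generic Kronecker and pairing recipes degenerate, specifically $m=3$ with $n$ odd and $n\ge 5$, and $m=2$ with $n$ even and $n\ge 4$, since there is little room to arrange the pairs or Latin rectangles. I would handle these by constructing one explicit magic $(m,m_0)$-rectangle for each small $m$ as a seed and then growing $n$ in steps of $2$, appending two columns whose entries form complementary $m$-tuples with the correct column sum; this inductive extension preserves the parity hypothesis and covers all admissible $(m,n)$.
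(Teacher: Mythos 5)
The paper offers no proof of this statement: it is imported verbatim as Harmuth's classical theorem \cite{Har1,Har2}, so your sketch has to be judged against the known constructions rather than against anything in the text. Your necessity argument is correct and complete: each row must sum to $n(mn+1)/2$ and each column to $m(mn+1)/2$, opposite parities make one of these a half-integer, and in the $(2,2)$ case the two cells sharing a line with the cell labelled $1$ would both have to be labelled $4$.

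The sufficiency half, however, has genuine gaps. (i) The even--even ``pairing'' recipe is internally inconsistent as stated: there are only $mn/2$ complementary pairs in total, so if every row contains $n/2$ whole pairs then \emph{all} pairs lie inside rows and none lies inside a column, and the column sums are not controlled at all, let alone ``automatically''. The real content of the even case is exactly the balancing you elide: put the pairs into columns (making column sums automatic) and then solve a nontrivial selection problem deciding which member of each pair goes into which row so that the row sums also come out right. (ii) Your rescue for the borderline case $m=2$ --- append two columns of complementary pairs to a magic $(2,n)$-rectangle --- provably fails. Any relabelling that keeps the old block magic must shift it by a constant $c$; the forced choice is $c=2$, the four new labels are $1,2,2n+3,2n+4$, each new column must sum to $2n+5$, which forces the columns $\{1,2n+4\}$ and $\{2,2n+3\}$, and each row must gain $2n+5$ from its two new entries, yet the four possible row pairs sum to $3$, $2n+4$, $2n+6$ and $4n+7$. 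So the induction cannot start in the very case it was introduced to handle, and the $2\times n$ rectangles must be built directly. (iii) In the odd--odd case, the existence of auxiliary arrays $A$ and $B$ with constant row \emph{and} column sums together with the joint bijection property is precisely the nontrivial kernel of the theorem; a cyclic Latin rectangle $a_{ij}=(i+j)\bmod n$ with $m\neq n$ does \emph{not} have constant column sums (the columns are windows of $m$ consecutive residues, whose sums vary with the wraparound), so this step also needs an actual construction rather than an assertion.
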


A related concept is the notion of \emph{distance antimagic labeling}. This is again a bijection $\overline{f}$ from $V(G)$
to $\{1,\ldots ,n\}$ but this time different vertices are required to have distinct weights (where the sums are taken over the open neighborhoods $N_{G}(x)$). A more restrictive
version of this labeling is the $(a, d)$-distance antimagic labeling. It is a distance antimagic labeling with
the additional property that the weights of vertices form an arithmetic progression with difference $d$ and
first term $a$. If $d = 1$, then $\overline{f}$ is called simply \emph{distance antimagic labeling} \cite{Fro}. Notice that if  a graph has a closed distance magic labeling then it has a distance antimagic labeling. The opposite is however not true, as it can be easily checked on the example of $C_5$. It has no closed distance magic labeling, while it has a distance antimagic labeling $\overline{f}(v_i)=i$, where $V(C_5)=\{v_1,v_2,v_3,v_4,v_5\}$ and $E(C_5)=\{\{v_i,v_j\}:|i-j|=1 \vee |i-j|=4\}$.

Finding an $r$-regular distance antimagic labeling
turns out to be equivalent to finding  a fair incomplete tournament FIT$(n, r)$
\cite{Fro}. A \emph{fair incomplete tournament} of $n$ teams with $g$ rounds, FIT$(n,r)$, is a tournament
in which every team plays $r$ other teams and the total strength of the opponents that
team $i$ plays is $S_{n,r}(i) = (n + 1)(n - 2)/2 + i - c$ for every $i$ and some fixed constant
$c$.

We recall one of four standard graph products (see \cite{IK}). Let $G$ and $H$ be two graphs. The \emph{strong product} $G\boxtimes H$ is a graph with vertex set $V(G)\times V(H)$. Two vertices $(g,h)$ and $(g',h')$ are adjacent in $G\boxtimes H$ if either $g=g'$ and $h$ is adjacent with $h'$ in $H$, or $h=h'$ and $g$ is adjacent with $g'$ in $G$, or $g$ is adjacent with $g'$ in $G$ and $h$ is adjacent with $h'$ in $H$. Recently in \cite{ACPT} two other standard products, namely direct and lexicographic, have been considered with respect to the property of being distance magic.

It is easy to notice the following observation, that will be useful in our further considerations.

\begin{myobservation}
\label{cor_regular} If $G$ is an $r$-regular closed distance magic graph on $n$ vertices, then $k'= \frac{(r+1)(n+1)}{2}$.
\end{myobservation}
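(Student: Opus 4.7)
The plan is the standard double-counting argument for magic-type labelings. I would sum the weight $w(x)$ over all vertices $x \in V(G)$ in two different ways and equate the results.

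First, since every vertex has weight $k'$, the sum $\sum_{x \in V(G)} w(x)$ equals $n k'$. Second, expanding the definition gives $\sum_{x \in V(G)} \sum_{y \in N_G[x]} \ell(y)$, and I would swap the order of summation. Each label $\ell(y)$ is counted once for each $x$ with $y \in N_G[x]$, i.e., once for each $x \in N_G[y]$; since $G$ is $r$-regular, $|N_G[y]| = r+1$ for every $y$. Hence the double sum becomes $(r+1)\sum_{y \in V(G)} \ell(y)$.

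Because $\ell$ is a bijection onto $\{1,\dots,n\}$, we have $\sum_{y \in V(G)} \ell(y) = \tfrac{n(n+1)}{2}$. Equating the two expressions yields
\[
n k' = (r+1)\cdot \frac{n(n+1)}{2},
\]
and dividing by $n$ gives $k' = \tfrac{(r+1)(n+1)}{2}$, as claimed.

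There is no real obstacle here; the only point to watch is the use of the symmetry of the closed-neighborhood relation (so that counting pairs $(x,y)$ with $y \in N_G[x]$ can be reorganized as counting pairs with $x \in N_G[y]$) and the observation that for a closed neighborhood the count $|N_G[y]|$ is $r+1$ rather than $r$, which is precisely what distinguishes this from the analogous formula for open-neighborhood distance magic labelings.
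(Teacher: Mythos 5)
Your double-counting argument is correct and complete, and it is exactly the standard computation the paper has in mind when it states this observation without proof (``it is easy to notice''). Nothing to add.
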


In the next section we reveal somewhat surprising connection between the existence of closed distance magic labeling and the spectrum of a graph.
In the following sections we consider the existence of closed distance magic labelings of chosen families of graphs
using algebraic tools developed in Section \ref{algeb}. In the final section we present a combinatorial problem whose
solution yields more closed distance magic graphs.

\section{Necessary conditions - algebraic approach}\label{algeb}

Let $V(G)=\{x_{1},\dots ,x_{n}\}$. Then the following system of equations with unknowns $\ell (x_{1}),\dots ,\ell (x_{n})$ has to be satisfied for every closed distance magic graph:
\begin{equation}\label{system1}
\begin{array}{l}
w(x_{1})=k', \\
w(x_{2})=k', \\
\quad \vdots \\
w(x_{n})=k'.
\end{array}%
\end{equation}

By writing this system in matrix form, we get
\begin{equation*}
(\textbf{A}(G)+\textbf{I}_{n})\textbf{l}=k' \textbf{u}_n,
\end{equation*}
where $\textbf{A}(G)$ is the adjacency matrix of $G$, $\textbf{I}_{n}$ is $n\times n$ identity matrix, $\textbf{l}=(\ell(x_{1}),\ldots ,$ $\ell(x_{n}))$ and $\textbf{u}_n$ is a vector of length $n$ with every entry equal to $1$.

It is well known that the rank of a square matrix is equal to the number of its non-zero singular values (see e.g. \cite{Chat}, p.31). In the case of symmetric matrices it is in turn equal to the number of the non-zero eigenvalues. It means that the dimension of the set of solutions of the system (\ref{system1}) equals to the multiplicity of $0$ in the spectrum of $\textbf{A}(G)+\textbf{I}_{n}$, i.e., to the multiplicity of $-1$ in $Sp(G)$, where $Sp(G)$ denotes the spectrum of $G$.

It has been recently proved by O'Neil and Slater in \cite{ONSl} that if a graph is closed distance magic, then the magic constant $k^\prime$ is unique, i.e., even if there exist two distinct closed distance magic labelings, then they result in same magic constant $k^\prime$.

The above considerations lead us to the following result.

\begin{mytheorem}
\label{thm_spectrum}If $G$ is a closed distance magic graph and the system (\ref{system1}) has $k+1$ linearly independent solutions, then the multiplicity of $-1$ in $Sp(G)$ is $k$.
\end{mytheorem}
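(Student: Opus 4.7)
The plan is to exploit the linear-algebraic setup already established in the paragraph preceding the theorem: the closed distance magic labelings of $G$ are exactly the solutions $\mathbf{l}\in\mathbb{R}^n$ of the inhomogeneous linear system
\begin{equation*}
(\mathbf{A}(G)+\mathbf{I}_n)\mathbf{l} = k'\mathbf{u}_n,
\end{equation*}
and since $G$ is assumed closed distance magic this system has at least one particular solution $\mathbf{l}_0$ (the given labeling). Hence the full solution set is the affine subspace $\mathbf{l}_0 + \ker(\mathbf{A}(G)+\mathbf{I}_n)$. What I need to do is translate the hypothesis ``the system has $k+1$ linearly independent solutions'' into a statement about $\dim\ker(\mathbf{A}(G)+\mathbf{I}_n)$ and then invoke the spectral theorem.

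First I would prove the elementary claim that, for an affine subspace $\mathbf{l}_0 + U$ of $\mathbb{R}^n$ with $U$ a linear subspace of dimension $d$, the maximum size of a linearly independent set of points in that affine subspace equals $d+1$. The ``$\geq d+1$'' direction is immediate: if $\mathbf{v}_1,\dots,\mathbf{v}_d$ is a basis of $U$, then the vectors $\mathbf{l}_0,\mathbf{l}_0+\mathbf{v}_1,\dots,\mathbf{l}_0+\mathbf{v}_d$ are $d+1$ solutions and they are linearly independent, since any nontrivial dependence among them would force a nontrivial dependence among $\mathbf{v}_1,\dots,\mathbf{v}_d$ after subtracting $\mathbf{l}_0$. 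The ``$\leq d+1$'' direction is the only mildly subtle step and is the place I would be most careful: given $d+2$ solutions $\mathbf{s}_0,\dots,\mathbf{s}_{d+1}$, the $d+1$ differences $\mathbf{s}_i - \mathbf{s}_0$ lie in $U$ and hence admit a nontrivial relation $\sum_{i=1}^{d+1}a_i(\mathbf{s}_i-\mathbf{s}_0)=0$; regrouping as $\bigl(-\sum_i a_i\bigr)\mathbf{s}_0 + \sum_{i=1}^{d+1} a_i \mathbf{s}_i = 0$ yields a nontrivial linear dependence among $\mathbf{s}_0,\dots,\mathbf{s}_{d+1}$ themselves.

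Applying this with $U = \ker(\mathbf{A}(G)+\mathbf{I}_n)$, the hypothesis that the system admits $k+1$ (maximally) linearly independent solutions gives $\dim\ker(\mathbf{A}(G)+\mathbf{I}_n)=k$. To finish, I would invoke that $\mathbf{A}(G)$, and hence $\mathbf{A}(G)+\mathbf{I}_n$, is real symmetric, so it is orthogonally diagonalizable and its nullity equals the multiplicity of $0$ as an eigenvalue. Since the eigenvalues of $\mathbf{A}(G)+\mathbf{I}_n$ are precisely $\lambda+1$ for $\lambda\in Sp(G)$, the multiplicity of $0$ in $Sp(\mathbf{A}(G)+\mathbf{I}_n)$ coincides with the multiplicity of $-1$ in $Sp(G)$, which is therefore equal to $k$, as required. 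The argument is essentially a clean bookkeeping exercise in linear algebra; the only non-automatic step is the affine/linear translation in the middle paragraph.
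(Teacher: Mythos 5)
Your argument is correct and follows essentially the same route as the paper, which derives the theorem from the observation that the nullity of the symmetric matrix $\mathbf{A}(G)+\mathbf{I}_n$ equals the multiplicity of $0$ in its spectrum, i.e.\ of $-1$ in $Sp(G)$; you merely make explicit the affine-to-linear bookkeeping (solution set $=\mathbf{l}_0+\ker(\mathbf{A}(G)+\mathbf{I}_n)$, so $k+1$ maximally independent solutions means nullity $k$) that the paper leaves implicit in the discussion preceding the theorem.
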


The following corollary will be used in the remainder of the paper.

\begin{mycorollary}\label{piki}
Let $G$ be a closed distance magic graph such that there exist $k$ linearly independent solutions to the system (\ref{system1}) such that no bijection $\ell\colon V(G)\rightarrow \{1,\dots,|V(G)|\}$ is their linear combination. Then the multiplicity of $-1$ in $Sp(G)$ is at least $k$.
\end{mycorollary}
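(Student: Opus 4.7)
The plan is to reduce the corollary to Theorem \ref{thm_spectrum} by producing one additional solution that is linearly independent from the given ones.

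First, I would use the closed distance magic assumption on $G$ to exhibit one more solution of the system (\ref{system1}). By the definition of a closed distance magic graph, there exists a bijection $\ell_0\colon V(G)\rightarrow \{1,\dots,n\}$ that satisfies $(\textbf{A}(G)+\textbf{I}_{n})\ell_0 = k'\textbf{u}_n$. By the hypothesis of the corollary, no bijection of the form $V(G)\rightarrow \{1,\dots,n\}$ can be written as a linear combination of $\mathbf{v}_{1},\dots,\mathbf{v}_{k}$; in particular, $\ell_0$ is not such a combination.

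Next, I would show that the $k+1$ vectors $\ell_0,\mathbf{v}_{1},\dots,\mathbf{v}_{k}$ are linearly independent solutions of (\ref{system1}). Suppose $c_0\ell_0 + c_1\mathbf{v}_{1}+\dots+c_k\mathbf{v}_{k}=\mathbf{0}$. If $c_0\neq 0$, then $\ell_0 = -c_0^{-1}(c_1\mathbf{v}_{1}+\dots+c_k\mathbf{v}_{k})$, contradicting the hypothesis. Hence $c_0=0$, and the assumed linear independence of $\mathbf{v}_{1},\dots,\mathbf{v}_{k}$ forces $c_1=\dots=c_k=0$. Each of these vectors also solves (\ref{system1}) by construction.

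Finally, applying Theorem \ref{thm_spectrum} to these $k+1$ linearly independent solutions yields at once that the multiplicity of $-1$ in $Sp(G)$ is at least $k$, which is the desired conclusion. There is no serious obstacle here: the corollary is essentially a repackaging of Theorem \ref{thm_spectrum} in which the ``extra'' solution demanded by that theorem is supplied automatically by the closed distance magic hypothesis, once one observes via the bijectivity clause that this new solution cannot already lie in the span of the $k$ given ones.
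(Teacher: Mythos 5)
Your argument is correct and is exactly the intended one: the paper states this corollary without a separate proof, as an immediate consequence of Theorem \ref{thm_spectrum}, and you supply precisely the missing step --- the closed distance magic hypothesis provides a bijective solution $\ell_0$ of (\ref{system1}), which by the non-spanning assumption is independent of the given $k$ solutions, yielding $k+1$ linearly independent solutions. Applying Theorem \ref{thm_spectrum} (or the rank/nullity discussion preceding it) then gives multiplicity at least $k$, as required.
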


If $G$ is an $r$-regular graph, then clearly the system (\ref{system1}) has at least one solution not being bijection from $V(G)$ to $\{1,\dots,|V(G)|\}$, namely $\ell (x_{1})=\dots =\ell (x_{n})=k^{\prime }/(r+1)$. Hence the following holds:

\begin{mycorollary}\label{neces_Sp-1}
\label{prop_spectrum}If $G$ is a regular closed distance magic graph, then $-1\in Sp(G)$.
\end{mycorollary}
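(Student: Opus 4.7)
The plan is to apply Corollary \ref{piki} with $k = 1$, using the constant vector as the required solution. Everything needed has essentially been set up in the paragraph immediately preceding the corollary.

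First, I would verify algebraically that for an $r$-regular graph, $\mathbf{A}(G)\mathbf{u}_n = r\mathbf{u}_n$, hence $(\mathbf{A}(G)+\mathbf{I}_n)\mathbf{u}_n = (r+1)\mathbf{u}_n$. This shows that $\mathbf{l}_0 = \frac{k'}{r+1}\mathbf{u}_n$ is a (constant) solution of the system (\ref{system1}), exactly as noted in the remark preceding the corollary. Note that $\mathbf{l}_0$ is nonzero since the labels are positive integers (so $k'>0$).

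Next I would check the hypothesis of Corollary \ref{piki}. The set $\{\mathbf{l}_0\}$ is linearly independent, and any linear combination of it is a scalar multiple of $\mathbf{u}_n$, i.e.\ a constant vector. For $n \geq 2$, a bijection $\ell\colon V(G)\to\{1,\ldots,n\}$ takes at least two distinct values and therefore cannot be constant. Hence no bijection is a linear combination of $\mathbf{l}_0$, and the hypothesis of Corollary \ref{piki} is satisfied with $k = 1$. Applying the corollary yields that the multiplicity of $-1$ in $Sp(G)$ is at least $1$, i.e.\ $-1 \in Sp(G)$.

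There is really no obstacle here beyond noticing that the constant vector is always a solution in the regular case; the entire argument is a direct invocation of Corollary \ref{piki}, and the only subtlety is ruling out the trivial case $n = 1$ when forming the bijective comparison.
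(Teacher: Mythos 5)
Your proposal is correct and follows exactly the paper's argument: the paper likewise observes that the constant vector $\ell(x_1)=\dots=\ell(x_n)=k'/(r+1)$ solves the system (\ref{system1}) and is not a bijection, and then invokes Corollary \ref{piki} with $k=1$. You merely spell out the routine verifications (regularity gives $(\mathbf{A}(G)+\mathbf{I}_n)\mathbf{u}_n=(r+1)\mathbf{u}_n$, and a constant vector cannot be a bijection for $n\geq 2$) that the paper leaves implicit.
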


A \textit{perfect code} is a subset $C(G)$ of $V(G)$ such that the closed neighborhoods of the vertices $v\in C$ form a partition of $G$. It is known that every regular graph $G$ with a perfect code must satisfy $-1\in Sp(G)$ (see e.g. \cite{Big}, p. 22). Observe however that in the case of a regular graph $G$ having a perfect code, the system (\ref{system1}) has at least the following two solutions: $\ell (x_{1})=\dots =\ell (x_{n})=k'/(r+1)$ and $\ell(x)=k', x\in C(G)$, $\ell(x)=0, x\in V(G)-C(G)$. Obviously these solutions are linearly independent and no bijection $\ell\colon V(G)\rightarrow \{1,\dots,|V(G)|\}$ is their linear combination. This means that the following is true.

\begin{mycorollary}
Let $G$ be a regular closed distance magic graph having a perfect code. Then the multiplicity of $-1$ in $Sp(G)$ is at least $2$.
\end{mycorollary}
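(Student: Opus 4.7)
The plan is to invoke Corollary \ref{piki} with $k=2$. For this I need to produce two linearly independent solutions of the system (\ref{system1}) such that no bijection $\ell\colon V(G)\to\{1,\dots,n\}$ lies in their span. The paragraph preceding the statement essentially suggests the two candidates, so my proof would consist of checking that both are genuine solutions, that they are linearly independent, and that no linear combination of them is a bijection onto $\{1,\dots,n\}$.

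First I would let $r$ denote the regularity degree of $G$ and $C=C(G)$ a perfect code, and take
\[
\mathbf{l}_1:\ \ell(x)=\tfrac{k'}{r+1}\ \text{for all } x\in V(G), \qquad
\mathbf{l}_2:\ \ell(x)=k'\ \text{if } x\in C,\ \ell(x)=0\ \text{otherwise.}
\]
That $\mathbf{l}_1$ solves (\ref{system1}) is immediate from $|N[x]|=r+1$. For $\mathbf{l}_2$, the defining property of a perfect code is that the closed neighborhoods $\{N[v]:v\in C\}$ partition $V(G)$, which means that every $N[x]$ contains exactly one vertex of $C$; thus $w(x)=k'$ for every $x\in V(G)$.

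Next I would verify linear independence. Since $G$ is closed distance magic we have $k'>0$, so $\mathbf{l}_1$ is nowhere zero. Because $|C|=n/(r+1)$, the set $V(G)\setminus C$ is nonempty as soon as $r\ge 1$, so $\mathbf{l}_2$ has a zero entry while $\mathbf{l}_1$ does not; hence they are linearly independent. Finally, any linear combination $\alpha\mathbf{l}_1+\beta\mathbf{l}_2$ takes at most the two distinct values $\alpha\tfrac{k'}{r+1}$ on $V(G)\setminus C$ and $\alpha\tfrac{k'}{r+1}+\beta k'$ on $C$; for $n\ge 3$ this cannot be a bijection onto $\{1,\dots,n\}$. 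Applying Corollary \ref{piki} then yields the claim.

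The only real obstacle is the degenerate case $r=0$ (isolated vertices), where $C=V(G)$ forces $\mathbf{l}_2$ to be a scalar multiple of $\mathbf{l}_1$; however in that case $G$ is a single vertex (the unique regular closed distance magic graph with $r=0$), so the hypothesis is vacuous or trivially excluded. Beyond this, the argument is essentially bookkeeping, leveraging the two structural facts already isolated in the text: the constant vector solves the system by regularity, and the scaled indicator of a perfect code solves it by the partition property of $C(G)$.
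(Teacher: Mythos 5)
Your proof is correct and follows exactly the paper's argument: the same two solutions (the constant vector and the scaled indicator of the perfect code), the same appeal to Corollary \ref{piki}, with the verification steps that the paper leaves implicit spelled out. Your explicit restriction to $n\ge 3$ in the bijection step is in fact slightly more careful than the paper, which glosses over the degenerate case $G=K_2$ (where the two vectors span all of $\mathbb{R}^2$ and the statement itself fails, since $-1$ has multiplicity $1$ in $Sp(K_2)$).
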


The following fact can be found, e.g., in \cite{ref_BroHae}, p.11.

\begin{myobservation}
Given any graphs $G$ and $H$, the eigenvalues of $G\boxtimes H$ have the form $(\lambda_G+1)(\lambda_H+1)-1$, where $\lambda_G\in Sp(G)$ and $\lambda_H\in Sp(H)$.
\end{myobservation}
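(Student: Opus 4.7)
The plan is to reduce the statement to a short computation with Kronecker products of adjacency matrices. The key observation about the strong product is that two vertices $(g,h),(g',h') \in V(G\boxtimes H)$ are either equal or adjacent if and only if ($g = g'$ or $g \sim g'$ in $G$) and ($h = h'$ or $h \sim h'$ in $H$). After ordering the vertices of $G\boxtimes H$ lexicographically, this combinatorial characterization translates directly into the clean matrix identity
\begin{equation*}
\mathbf{A}(G\boxtimes H) + \mathbf{I}_{nm} \;=\; (\mathbf{A}(G)+\mathbf{I}_n) \otimes (\mathbf{A}(H)+\mathbf{I}_m),
\end{equation*}
where $n=|V(G)|$, $m=|V(H)|$, and $\otimes$ denotes the Kronecker product. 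Verifying this identity is the only graph-theoretic step in the argument and amounts to checking, entry by entry, that the $((g,h),(g',h'))$-entry on the left equals $1$ precisely when both factors on the right are $1$.

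Once this identity is in hand, I would invoke the standard spectral property of Kronecker products: if $\mathbf{A}(G)\mathbf{v}=\lambda_G \mathbf{v}$ and $\mathbf{A}(H)\mathbf{w}=\lambda_H \mathbf{w}$, then $\mathbf{v}\otimes\mathbf{w}$ is a simultaneous eigenvector with
\begin{equation*}
(\mathbf{A}(G)+\mathbf{I}_n)\otimes(\mathbf{A}(H)+\mathbf{I}_m)(\mathbf{v}\otimes\mathbf{w}) = (\lambda_G+1)(\lambda_H+1)(\mathbf{v}\otimes\mathbf{w}).
\end{equation*}
Subtracting $\mathbf{I}_{nm}$ from both sides gives that $\mathbf{v}\otimes\mathbf{w}$ is an eigenvector of $\mathbf{A}(G\boxtimes H)$ with eigenvalue $(\lambda_G+1)(\lambda_H+1)-1$, which is exactly the claimed formula.

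To conclude that we have obtained \emph{all} eigenvalues (with correct multiplicities), I would use the fact that $\mathbf{A}(G)$ and $\mathbf{A}(H)$ are real symmetric, hence each admits an orthonormal eigenbasis $\{\mathbf{v}_i\}_{i=1}^n$ and $\{\mathbf{w}_j\}_{j=1}^m$. The $nm$ vectors $\mathbf{v}_i\otimes\mathbf{w}_j$ are then orthonormal in $\mathbb{R}^{nm}$, so they form a complete eigenbasis of $\mathbf{A}(G\boxtimes H)$, and the multiset of eigenvalues of $G\boxtimes H$ is exactly $\{(\lambda_G+1)(\lambda_H+1)-1 : \lambda_G\in Sp(G),\ \lambda_H\in Sp(H)\}$ counted with multiplicities.

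I do not expect any real obstacle: the only step requiring care is the verification of the matrix identity for $\mathbf{A}(G\boxtimes H)+\mathbf{I}_{nm}$, and even that follows immediately from the definition of the strong product once the vertex ordering is fixed. Everything else is a textbook application of Kronecker product spectra, which is precisely why this statement is included as an observation with a reference rather than a theorem requiring substantial work.
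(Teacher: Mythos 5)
Your proof is correct, and it is precisely the standard Kronecker-product argument: the paper does not prove this observation at all but simply cites Brouwer--Haemers, where the identity $\mathbf{A}(G\boxtimes H)+\mathbf{I}=(\mathbf{A}(G)+\mathbf{I})\otimes(\mathbf{A}(H)+\mathbf{I})$ and the tensor eigenbasis give exactly the argument you describe. No gaps; the multiplicity claim via orthonormal tensor eigenvectors is handled properly.
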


It follows that the value $-1$ can appear in $Sp(G\boxtimes H)$ if and only if it appears in the spectrum of at least one of the graphs $G$ and $H$. This leads us to the following corollary.

\begin{mycorollary}
If graphs $G$ and $H$ are regular and $G\boxtimes H$ is closed distance magic, then $-1\in Sp(G)\cup Sp(H)$.
\end{mycorollary}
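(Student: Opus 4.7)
The plan is to chain together three facts already in place: (i) the strong product of regular graphs is regular, (ii) Corollary \ref{prop_spectrum} forces $-1$ to lie in the spectrum of any regular closed distance magic graph, and (iii) the observation just above describing the spectrum of $G\boxtimes H$ in terms of the spectra of $G$ and $H$.

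First I would verify that $G\boxtimes H$ is regular. If $G$ is $r_G$-regular and $H$ is $r_H$-regular, then a vertex $(g,h)$ has as neighbors the pairs $(g',h)$ with $g'\sim g$, the pairs $(g,h')$ with $h'\sim h$, and the pairs $(g',h')$ with $g'\sim g$ and $h'\sim h$. This yields degree $r_G + r_H + r_G r_H = (r_G+1)(r_H+1)-1$ at every vertex, so $G\boxtimes H$ is regular.

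Next, since by assumption $G\boxtimes H$ is closed distance magic and we have just shown it is regular, Corollary \ref{prop_spectrum} gives $-1\in Sp(G\boxtimes H)$.

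Finally, invoking the observation preceding the corollary, every eigenvalue of $G\boxtimes H$ has the form $(\lambda_G+1)(\lambda_H+1)-1$ for some $\lambda_G\in Sp(G)$ and $\lambda_H\in Sp(H)$. The equation
\[
(\lambda_G+1)(\lambda_H+1)-1 = -1
\]
forces $(\lambda_G+1)(\lambda_H+1)=0$, hence $\lambda_G=-1$ or $\lambda_H=-1$, which yields $-1\in Sp(G)\cup Sp(H)$. No step here looks like a genuine obstacle; the proof is essentially a one-line application of the previous two results, with the only thing to check being the elementary fact that the strong product of regular graphs is regular.
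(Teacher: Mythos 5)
Your proposal is correct and follows exactly the route the paper intends: the paper derives this corollary from the preceding observation on $Sp(G\boxtimes H)$ together with Corollary \ref{prop_spectrum}, noting that $(\lambda_G+1)(\lambda_H+1)-1=-1$ forces $-1\in Sp(G)\cup Sp(H)$. Your added verification that the strong product of regular graphs is regular is a detail the paper leaves implicit, but it is the same argument.
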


If $G$ is an $r$-regular graph with $s$ distinct eigenvalues $r, \lambda_1,\dots,\lambda_{s-1}$, then its line graph $L(G)$ has at most $s+1$ distinct eigenvalues $2r-2, r+\lambda_1-2,\dots,r+\lambda_{s-1}-2,-2$. So the following is true.

\begin{mycorollary}
Let $G$ be an $r$-regular graph, $r>1$. If its line graph $L(G)$ is closed distance magic, then $1-r\in Sp(G)$.
\end{mycorollary}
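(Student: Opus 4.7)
The plan is to combine the eigenvalue description of $L(G)$ given just before the statement with the necessary condition from Corollary \ref{neces_Sp-1}. First I would note that since $G$ is $r$-regular, its line graph $L(G)$ is $(2r-2)$-regular: each edge of $G$ shares exactly $r-1$ other edges at each of its two endpoints. Hence $L(G)$ is itself a regular graph, and the hypothesis that $L(G)$ is closed distance magic together with Corollary \ref{neces_Sp-1} forces $-1\in Sp(L(G))$.

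Next I would use the description recalled in the paragraph preceding the corollary: every eigenvalue of $L(G)$ lies in the list
\[
\{\,2r-2,\; r+\lambda_1-2,\;\dots,\; r+\lambda_{s-1}-2,\; -2\,\},
\]
where $r,\lambda_1,\dots,\lambda_{s-1}$ are the distinct eigenvalues of $G$. I would then simply check which entries of this list can equal $-1$ under the assumption $r>1$ (so in particular $r\ge 2$). The value $2r-2$ satisfies $2r-2\ge 2>-1$, and clearly $-2\neq -1$. Therefore $-1$ must coincide with $r+\lambda_i-2$ for some $i\in\{1,\dots,s-1\}$, which gives $\lambda_i=1-r$ and hence $1-r\in Sp(G)$, as claimed.

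There is no real obstacle here; the proof is essentially a one-line verification once the two ingredients (regularity of $L(G)$ plus Corollary \ref{neces_Sp-1}, and the explicit spectrum of $L(G)$) are put side by side. The only thing one has to be careful about is using the hypothesis $r>1$ to rule out the two "boundary" eigenvalues $2r-2$ and $-2$, so that $-1$ genuinely has to come from a value of the form $r+\lambda_i-2$.
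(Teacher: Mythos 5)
Your proof is correct and follows exactly the route the paper intends: the corollary is stated as an immediate consequence of the listed spectrum of $L(G)$ together with Corollary \ref{neces_Sp-1}, and your case check ruling out $2r-2$ and $-2$ as candidates for the eigenvalue $-1$ is precisely the missing verification. Nothing further is needed.
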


In the following sections we are going to discuss the existence of closed distance magic labelings of certain families of graphs.

%%%%%%%%%%%%%%%%%%%%%%%%%%%%%%%%%
%%%%%%%%%%%%%%%%%%%%%%%%%%%%%%%%%

\section{Complete graphs and their strong products}

It is obvious that every complete graph is closed distance magic (observe that every bijection $\ell \colon V(G)=K_n\rightarrow \{1,\ldots ,n\}$ results in equal vertex weights). This is however not true in the case of the complete bipartite graphs. Corollary \ref{neces_Sp-1} may be generalized on some non-regular graphs. It is enough that we are able to find any labeling satisfying the system of equations (\ref{system1}), not being the bijection from $V(G)$ to $\{1,\dots ,n\}$. Thus, for example, the statement in Corollary \ref{prop_spectrum} remains true also for complete bipartite graphs $K_{m,n}$, where $2\leq m<n$. Here for $V(K_{m,n})=\{x_{1},\ldots ,x_{n}\}\cup \{x_{n+1},\ldots ,x_{n+m}\}$, the sample solution of the system (\ref{system1}) is $\ell (x_{1})=\dots =\ell (x_{n})=k^{\prime }/m$ and $\ell (x_{n+1})=\dots =\ell (x_{n+m})=k^{\prime }/n$. This leads us immediately to the following result.

\begin{myproposition}
For any $m$ and $n$ such that $2\leq m\leq n$, $K_{m,n}$ is not closed distance magic.
\end{myproposition}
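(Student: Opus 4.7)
My plan is to follow the algebraic approach sketched just before the statement: show that any solution of the system (\ref{system1}) on $G = K_{m,n}$ must be constant on each partition class, so bijectivity forces a contradiction once $m, n \ge 2$.

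Write $A = \{x_1, \ldots, x_n\}$ and $B = \{x_{n+1}, \ldots, x_{n+m}\}$ for the two parts, and suppose for contradiction that $\ell$ is a closed distance magic labeling of $K_{m,n}$ with magic constant $k'$. For any $x, x' \in A$ one has $N[x] \setminus \{x\} = B = N[x'] \setminus \{x'\}$, so
\[
\ell(x) - \ell(x') = w(x) - w(x') = 0,
\]
and $\ell$ is constant on $A$; the analogous computation forces $\ell$ constant on $B$ as well. Because $n \ge m \ge 2$, $\ell$ cannot be bijective onto $\{1, \ldots, m+n\}$, contradiction.

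The same conclusion can be read off the spectrum, which is the viewpoint the paper emphasises. The adjacency spectrum of $K_{m,n}$ is $\{\sqrt{mn},\, 0^{\,m+n-2},\, -\sqrt{mn}\}$ and hence never contains $-1$ for $m, n \ge 1$. In the regular case $m = n$, Corollary \ref{prop_spectrum} applies directly. In the non-regular case $m < n$, the non-bijective constant-on-each-part labeling produced in the paragraph before the proposition is linearly independent from any putative bijective solution of (\ref{system1}), so Corollary \ref{piki} forces $-1 \in Sp(K_{m,n})$, contradicting the spectral computation above.

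There is no substantive obstacle; the two viewpoints simply verify each other. The only point requiring a moment of care is that the hypothesis $2 \le m \le n$ guarantees $n \ge 2$, which is exactly what is needed for the forced constant behaviour on $A$ to be incompatible with bijectivity.
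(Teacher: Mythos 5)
Your proposal is correct, and its first half is a genuinely more elementary route than the one the paper takes. The paper argues spectrally: since $Sp(K_{m,n})=\{-\sqrt{mn},0^{m+n-2},\sqrt{mn}\}$ omits $-1$ for $m,n\geq 2$, the matrix $\textbf{A}(K_{m,n})+\textbf{I}_{m+n}$ is nonsingular, so the system (\ref{system1}) has a unique solution, which is constant on each part and hence not a bijection. Your second paragraph reproduces essentially this reasoning via Corollaries \ref{prop_spectrum} and \ref{piki}. Your first paragraph, however, reaches the conclusion with no linear algebra at all: subtracting $w(x)=w(x')$ for $x,x'$ in the same part gives $\ell(x)=\ell(x')$ directly, which is incompatible with bijectivity as soon as one part has at least two vertices --- so it even disposes of the stars $K_{1,n}$, $n\geq 2$, which the stated proposition does not cover. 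Two small points of care. First, your remark that the spectrum ``never contains $-1$ for $m,n\geq 1$'' fails for $K_{1,1}=K_2$, where $-\sqrt{mn}=-1$; it is the hypothesis $m,n\geq 2$ that rescues the claim, as you in fact use. Second, the explicit constant-on-parts labeling quoted from the paper's preceding paragraph (value $k'/m$ on the part of size $n$ and $k'/n$ on the part of size $m$) does not actually satisfy the closed-neighborhood system except for special $(m,n)$; the correct values are $k'(m-1)/(mn-1)$ and $k'(n-1)/(mn-1)$ respectively. Your direct derivation that \emph{every} solution of (\ref{system1}) is constant on each part sidesteps this issue entirely and is the cleaner way to produce the non-bijective solution that Corollary \ref{piki} requires.
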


\begin{proof}The spectrum of the graph $K_{m,n}$ is $Sp(K_{m,n})=\{-\sqrt{mn},0^{m+n-2},\sqrt{mn}\}$ (see \cite{ref_BroHae}, p.8.), thus $-1$ is never its element for $m,n\geq 2$. In consequence, for any value $k^\prime$, the only solution of the system (\ref{system1}) is $\ell (x_{1})=\dots =\ell (x_{n})=k^{\prime }/m$ and $\ell (x_{n+1})=\dots =\ell (x_{n+m})=k^{\prime }/n$.
\end{proof}

The following result related to strong product was proved by Beena.

\begin{mytheorem}[\protect\cite{Be}]
\label{Beena} Let $G$ be any connected $r$-regular graph, $r > 0$. If $n$ is even, then $K_n\boxtimes G$ is a closed distance magic graph.
\end{mytheorem}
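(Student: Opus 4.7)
The plan is to reduce the construction of a closed distance magic labeling of $K_n \boxtimes G$ to a simple integer-partition problem about $\{1,2,\ldots,nm\}$, where $m = |V(G)|$.

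First I would analyze the closed neighborhood structure in $K_n \boxtimes G$. Because $K_n$ has every possible edge, a short case check on the definition of the strong product shows that for any vertex $(a,u) \in V(K_n) \times V(G)$,
\begin{equation*}
N_{K_n \boxtimes G}[(a,u)] \;=\; V(K_n) \times N_G[u].
\end{equation*}
In particular the closed neighborhood does not depend on the first coordinate $a$ at all. Consequently, for any bijection $\ell \colon V(K_n \boxtimes G) \to \{1,\ldots,nm\}$, the weight of $(a,u)$ equals
\begin{equation*}
w\bigl((a,u)\bigr) \;=\; \sum_{v \in N_G[u]} L(v), \qquad \text{where } L(v) := \sum_{b \in V(K_n)} \ell(b,v).
\end{equation*}
Since $G$ is $r$-regular, each set $N_G[u]$ has exactly $r+1$ elements. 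Therefore it suffices to choose $\ell$ so that the column sums $L(v)$ are all equal; the common value of $w$ will then be $(r+1)\cdot L(v)$, which is independent of $u$ and of $a$.

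The required common value of each $L(v)$ is forced: summing over all $v$ gives $1+2+\cdots+nm = nm(nm+1)/2$, so $L(v)$ must equal $n(nm+1)/2$. Thus the problem reduces to partitioning $\{1,2,\ldots,nm\}$ into $m$ blocks of size $n$, each block summing to $n(nm+1)/2$. This is where the parity hypothesis enters: when $n$ is even, I pair up the integers $1,2,\ldots,nm$ into the $nm/2$ complementary pairs $\{i,\,nm+1-i\}$, each of sum $nm+1$, and then group these pairs $n/2$ at a time into $m$ blocks. Each block then has $n$ elements summing to $(n/2)(nm+1) = n(nm+1)/2$, as required.

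Finally I would label: enumerate $V(G) = \{v_1,\ldots,v_m\}$, assign the $i$-th block as the set of labels placed on the $K_n$-fiber $V(K_n) \times \{v_i\}$ (the assignment within each fiber may be arbitrary), and confirm that $L(v_i) = n(nm+1)/2$ for all $i$, so that every weight equals $(r+1)n(nm+1)/2$. There is no real obstacle here beyond spotting the column-sum reduction; the only place where $n$ even is used is the balanced-partition construction, and I would note that for $n$ odd the same argument would require partitioning $\{1,\ldots,nm\}$ into blocks of odd size with equal sums, which need not always be possible (this is what limits Beena's theorem to even $n$).
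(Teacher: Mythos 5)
Your proof is correct and complete. The paper itself does not prove this statement (it is quoted from Beena's paper \cite{Be}), but your argument runs exactly parallel to the paper's own proof of the odd-order analogue, Theorem~\ref{magic_product_G_K}: there the authors also observe that the closed neighborhood of $(y_i,x_j)$ in $K_n\boxtimes G$ sweeps out entire $K_n$-fibers over $N_G[x_j]$, so it suffices to make all fiber sums equal, and they achieve this by taking the columns of a magic $(n,m)$-rectangle. Your one genuine departure is the source of the equal-sum partition: instead of invoking Harmuth's theorem you build the $m$ blocks from the complementary pairs $\{i,\,nm+1-i\}$, grouped $n/2$ at a time. This is not just a stylistic choice but a necessary one in the even case, since by Theorem~\ref{rectangles} a magic $(n,m)$-rectangle exists only when $n\equiv m\pmod 2$, and here $m=|V(G)|$ is arbitrary; your construction also makes clear that only constant column sums (not full magic-rectangle structure) are needed. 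Two minor remarks: the connectivity hypothesis on $G$ is never used, so you in fact prove a slightly stronger statement, and your closing observation correctly identifies why evenness of $n$ is the operative hypothesis.
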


Next result extends Theorem~\ref{Beena} for the case when both graphs have odd order.

\begin{mytheorem}\label{magic_product_G_K}
Let $n$ be an odd number and $r>0$. If $G$ is an $r$-regular graph of an odd order, then $K_{n}\boxtimes G$ is a closed distance magic graph.
\end{mytheorem}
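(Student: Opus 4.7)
The plan is to exploit the completeness of $K_n$ to turn the magic condition on $K_n \boxtimes G$ into a simple condition on column sums. Denote the vertices of $K_n \boxtimes G$ by pairs $(i,g)$ with $i \in V(K_n)$ and $g \in V(G)$, and let $m = |V(G)|$. Because $K_n$ is complete, every distinct pair in $V(K_n)$ is adjacent, so for any vertex $(i,g)$ one has
\[
N_{K_n \boxtimes G}[(i,g)] = V(K_n) \times N_G[g].
\]
Consequently, for any labeling $\ell\colon V(K_n\boxtimes G)\to\{1,\dots,nm\}$, introducing the column sums $C(g)=\sum_{i=1}^n \ell(i,g)$, the weight of $(i,g)$ collapses to
\[
w(i,g)=\sum_{g'\in N_G[g]} C(g'),
\]
which depends on $g$ only, not on $i$.

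The natural strategy is therefore to construct $\ell$ so that all column sums $C(g)$ are equal to a common value $C$. In that case $w(i,g)=(r+1)C$ for every vertex, giving the desired magic constant. Averaging $1+2+\cdots+nm$ over the $m$ columns forces
\[
C = \frac{n(nm+1)}{2}.
\]
Since both $n$ and $m$ are odd, $nm$ is odd and $nm+1$ is even, so $C$ is indeed a positive integer — this is precisely where the odd/odd parity hypothesis is used. Observation~\ref{cor_regular} provides a sanity check, since $K_n \boxtimes G$ is $((r+1)n-1)$-regular and the predicted magic constant $(r+1)C = \frac{(r+1)n(nm+1)}{2}$ matches the formula $\frac{(r+1)(n+1)}{2}$ evaluated for this product.

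To realize such a labeling I would invoke Theorem~\ref{rectangles}: because $n$ and $m$ are both odd with $n,m\ge 3$ (note $m\ge 3$ follows from $r>0$ and the handshake lemma forcing $r$ to be even, hence $r\ge 2$), the parity condition $m\equiv n \imod{2}$ holds and $(m,n)\neq(2,2)$. Harmuth's theorem therefore guarantees a magic $(n,m)$-rectangle $S$, i.e.\ an $n\times m$ array whose entries are exactly $\{1,\dots,nm\}$ with constant row sums and constant column sums; the constant column sum must be $C=\frac{n(nm+1)}{2}$. I would then simply define $\ell(i,g)$ to be the entry of $S$ in row $i$ and column $g$ (after fixing arbitrary orderings of $V(K_n)$ and $V(G)$). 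By construction $\ell$ is a bijection and every column sum equals $C$, so by the reduction above $K_n\boxtimes G$ is closed distance magic.

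The only real obstacle is algebraic rather than combinatorial: one must verify that the required column sum is an integer in order to invoke Theorem~\ref{rectangles}, and this is exactly what the joint oddness of $n$ and $m$ delivers. Everything else is bookkeeping — the closed-neighborhood identity in the strong product and the translation from "equal column sums" to a magic rectangle are immediate once the framework is set up.
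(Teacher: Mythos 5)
Your proof is correct and follows essentially the same route as the paper: both invoke Harmuth's theorem to obtain a magic $(n,m)$-rectangle, place its $j$-th column on the copy of $K_n$ above the $j$-th vertex of $G$ so that all column sums equal a common constant $C$, and conclude that every weight is $(r+1)C$. Your write-up is somewhat more explicit about the closed-neighborhood identity in the strong product and the integrality of $C$, but there is no substantive difference.
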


\begin{proof} Let  $V(K_{n})=\{y_{0},\ldots ,y_{n-1}\}$ and $V(G)=\{x_{0},\ldots ,x_{m-1}\}$ for odd $m$ and $n$. For a vertex $(y_{i},x_{j})$ from $V(K_{n}\boxtimes G)$ we simply write $v_{i,j}$. Notice that if $x_{p}x_{q}\in E(G)$, then $v_{j,q}\in N_{H}(v_{i,p})$ for every $j\in \{0,\ldots ,n-1\}$. There exists a magic $(n,m)$-rectangle by Theorem~\ref{rectangles}. Let $a_{i,j}$ be an $(i,j)$-entry of the $(n,m)$-rectangle, $1\leq i\leq n$ and $1\leq j\leq m$ . Define the labeling $\ell \colon V(K_{n}\boxtimes G)\rightarrow \{1,\ldots ,nm\}$ as $\ell(v_{i,j})=a_{i+1,j+1}$ for $i\in \{0,\ldots ,n-1\}$ and $j\in \{0,\ldots,m-1\}$. Obviously $\ell $ is a bijection and moreover for each $j$ we have $\sum_{i=0}^{n-1}\ell(v_{i,j})=C$ for some constant $C$. Therefore for any $x\in V(K_{n}\boxtimes G)$ we have $w(x)=(r+1)C=k^\prime$.\end{proof}

An $r$-regular graph $G$ is called $(r,a,b)$-strongly regular if every pair of adjacent vertices has $a\geq 0$ common neighbors and every pair of non-adjacent vertices has $b\geq 1$ common neighbors. The following is true.

\begin{myproposition}
If a strongly regular graph $G$ on $n$ vertices is closed distance magic, then $G\cong K_n$.
\end{myproposition}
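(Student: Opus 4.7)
The plan is to combine the necessary spectral condition from Corollary \ref{neces_Sp-1} with the well-known eigenvalue theory of strongly regular graphs, and then squeeze the parameters hard enough to force $G$ to be a disjoint union of cliques, ruling out all but $K_n$ via $b\geq 1$.

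First I would observe that a strongly regular graph is in particular $r$-regular, so Corollary \ref{neces_Sp-1} immediately yields $-1\in Sp(G)$. Next, I would invoke the standard fact that a $(r,a,b)$-strongly regular graph has, besides the trivial eigenvalue $r$, exactly two other distinct eigenvalues, namely the roots of
\begin{equation*}
x^{2}-(a-b)x-(r-b)=0.
\end{equation*}
Substituting $x=-1$ and simplifying forces the clean relation $a=r-1$. So the only strongly regular graphs that can possibly be closed distance magic are those in which every pair of adjacent vertices already shares $r-1$ common neighbors.

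The next step is a short combinatorial argument: if $u$ and $v$ are adjacent, then $u$ has exactly $r$ neighbors, one of which is $v$, and the remaining $r-1$ neighbors must all lie in $N(v)$ (because $|N(u)\cap N(v)|=r-1$ and the set $N(u)\setminus\{v\}$ already has cardinality $r-1$). Hence $N[u]=N[v]$ whenever $u$ and $v$ are adjacent. Propagating this identity along edges of a connected component shows that each component induces a clique $K_{r+1}$, so $G$ is a disjoint union of copies of $K_{r+1}$.

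Finally, I would use the condition $b\geq 1$ to conclude connectedness: if $G$ had two distinct components, any pair of vertices from different components would be non-adjacent with $0$ common neighbors, contradicting $b\geq 1$. Therefore $G$ consists of a single component $K_{r+1}$, and since $|V(G)|=n$, we conclude $G\cong K_n$. I do not anticipate a real obstacle here; the only subtle point is being careful that the definition of strongly regular in the paper allows the eigenvalue equation above (the trivial case $K_n$, where there are no non-adjacent pairs, is vacuously strongly regular and is precisely the output of the theorem).
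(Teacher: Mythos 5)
Your proposal is correct and follows essentially the same route as the paper: apply Corollary \ref{neces_Sp-1} to get $-1\in Sp(G)$, substitute into the quadratic satisfied by the non-trivial eigenvalues of a strongly regular graph to force $a=r-1$, conclude that $G$ is a disjoint union of cliques, and use $b\geq 1$ to rule out more than one component. You merely spell out in more detail the step that $a=r-1$ forces each component to be a clique, which the paper states without elaboration.
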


%\begin{proof}\ The eigenvalues of $G$ are $r$ with multiplicity $1$ and two roots $x_1$, $x_2$ of the equation $x^2+(c-a)x+c-r=0$ with %multiplicities  $m_1$ and $m_2$ respectively satisfying the system of equations
%$$
%\begin{array}{rl}
%m_1+m_2 &=n-1\\
%r+m_1x_1+m_2x_2 &=0
%\end{array}
%$$
%(see e.g. \cite{Big}, p. 20). By the Corollary \ref{neces_Sp-1}, $-1$ must be solution of this equation (say $x_1=-1$) and thus $a=r-1$. This implies %that $G$ is disjoint union of some number of copies of $K_n$. As $b>0$, we have $G=K_n$.\end{proof}\\

\begin{proof} The eigenvalues of $G$ are $r$ and two roots $x_1$, $x_2$ of the equation $x^2+(c-a)x+c-r=0$ (see e.g. \cite{Big}, p. 20). By Corollary \ref{neces_Sp-1}, $-1$ must be a solution of this equation and thus $a=r-1$. This implies that $G$ is a disjoint union of some number of copies of $K_n$. In such a graph, the non-adjacent vertices do not have common neighbors. As in the strongly regular graph every pair of non-adjacent vertices has at least one vertex in common, we have $G\cong K_n$.\end{proof}

%%%%%%%%%%%%%%%%%%%%%%%%%%%%%%%%%
%%%%%%%%%%%%%%%%%%%%%%%%%%%%%%%%%

\section{Cycles and their strong products}

It is easy to see that $C_3$ is the only closed distance magic graph among cycles.
Nevertheless, we start this section with (also easy) algebraic proof of this fact in order to present a simple application of the methods developed in Section \ref{algeb}.
For this we need the following observation, which can be found for instance in \cite{ref_BroHae}, p.9.

\begin{myobservation}
The spectrum of an undirected cycle $C_{n}$ consists of numbers $2\cos(2\pi j/n)$, $j\in \{1,\dots ,n\}$.
\end{myobservation}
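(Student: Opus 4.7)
The statement is the standard diagonalization of the circulant adjacency matrix $A(C_n)$, so my plan is to exhibit a full eigenbasis directly from roots of unity. First I would label the vertices cyclically as $v_0, v_1, \ldots, v_{n-1}$; with this ordering $A(C_n)$ has first row $(0,1,0,\ldots,0,1)$, is therefore circulant, commutes with the cyclic shift matrix, and hence is simultaneously diagonalizable in the standard Fourier basis.

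Next, with $\omega = e^{2\pi i/n}$, I would set $\mathbf{v}_j = (1,\omega^j,\omega^{2j},\ldots,\omega^{(n-1)j})^T$ for $j \in \{0,1,\ldots,n-1\}$ and check in one line that
\begin{equation*}
(A(C_n)\mathbf{v}_j)_k = \omega^{(k-1)j} + \omega^{(k+1)j} = \left(\omega^j + \omega^{-j}\right) \omega^{kj} = 2\cos(2\pi j/n)\,(\mathbf{v}_j)_k,
\end{equation*}
so each $\mathbf{v}_j$ is an eigenvector of $A(C_n)$ with eigenvalue $2\cos(2\pi j/n)$. Then I would note that the matrix whose columns are $\mathbf{v}_0, \ldots, \mathbf{v}_{n-1}$ is Vandermonde in the pairwise distinct values $1, \omega, \ldots, \omega^{n-1}$ and is therefore invertible, so these vectors form a basis of $\mathbb{C}^n$. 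Consequently the multiset of eigenvalues of $A(C_n)$ is exactly $\{2\cos(2\pi j/n) : j = 0, 1, \ldots, n-1\}$; since $2\cos(2\pi\cdot 0/n) = 2\cos(2\pi\cdot n/n)$, re-indexing gives the form $j \in \{1,\ldots,n\}$ stated in the observation.

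There is no real obstacle here, which is presumably why the paper simply cites the result. The single point meriting attention is the linear independence of the $\mathbf{v}_j$, since without it the $n$ computed eigenvalues would not a priori exhaust the spectrum; the Vandermonde argument disposes of this, and the rest is a routine one-line spectral computation.
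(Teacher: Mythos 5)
Your proof is correct: the eigenvector computation, the Vandermonde argument for completeness of the Fourier basis, and the re-indexing are all standard and sound. The paper itself gives no proof, merely citing Brouwer and Haemers, and your argument is precisely the standard one found there, so there is nothing further to compare.
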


If $-1\in Sp(C_n)$, then we have for some $j$
\begin{equation*}
2\cos(2\pi j/n)=-1,
\end{equation*}
what is equivalent to the fact that $j\in \{n/3, 2n/3\}$. It implies in
turn that $n\equiv 0\, ({\rm mod}\, 3)$. Moreover, as $j$ can take one of
two values, it means that in this case the multiplicity of $-1$ in $Sp(C_n)$
is exactly $2$. This leads us to the following results.

\begin{myproposition}\label{miki}
The only closed distance magic cycle is $C_3=K_3$.
\end{myproposition}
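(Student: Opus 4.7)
The plan is to combine the algebraic criterion from Section \ref{algeb} with a direct inspection of the defining magic equations, handling the small case separately.

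First, the base case $n = 3$ is immediate: $C_3 = K_3$, and as observed in the previous section every bijection from $V(K_3)$ to $\{1,2,3\}$ is a closed distance magic labeling (with $k' = 6$).

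For $n \geq 4$, I would begin by applying Corollary \ref{neces_Sp-1}. Since $C_n$ is $2$-regular, if $C_n$ were closed distance magic then $-1$ would have to lie in $Sp(C_n)$. By the spectrum computation preceding the proposition, $-1 \in Sp(C_n)$ exactly when $n \equiv 0 \pmod 3$, which disposes of all $n \geq 4$ that are not multiples of $3$. What remains is to rule out $n = 3m$ with $m \geq 2$.

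For such $n$, label the vertices cyclically as $v_0, v_1, \ldots, v_{n-1}$ so that $N[v_i] = \{v_{i-1}, v_i, v_{i+1}\}$ with indices taken modulo $n$. Subtracting the magic equation $w(v_{i+1}) = k'$ from $w(v_i) = k'$ cancels the common terms $\ell(v_i) + \ell(v_{i+1})$ and leaves $\ell(v_{i-1}) = \ell(v_{i+2})$ for every $i$. Hence any solution of the system (\ref{system1}) must be periodic with period $3$, so it takes at most three distinct values. Since $\ell$ is required to be a bijection onto $\{1, \ldots, n\}$ and $n \geq 6$, this is a contradiction and the proof is complete.

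There is no genuine obstacle here; the delicate step is simply to notice that the trick of differencing consecutive magic equations forces $3$-periodicity instantly. An equivalent but slightly longer route, entirely in the spirit of Section \ref{algeb}, would be to apply Theorem \ref{thm_spectrum}: for $n = 3m \geq 6$ the multiplicity of $-1$ in $Sp(C_n)$ is exactly $2$, so the solution space of (\ref{system1}) is three-dimensional and spanned by the constant vector together with the two real eigenvectors of $\mathbf{A}(C_n) + \mathbf{I}_n$ for the eigenvalue $0$, all of which are $3$-periodic — leading to the same contradiction with bijectivity.
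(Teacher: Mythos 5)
Your proof is correct, but it takes a genuinely different route from the paper in the crucial case $n\equiv 0 \pmod 3$. The paper deliberately stays inside the machinery of Section \ref{algeb}: it writes down three explicit, linearly independent solutions $\ell_0,\ell_1,\ell_2$ of the system (\ref{system1}) (indicator-type vectors of the residue classes mod $3$), observes that every linear combination of them is $3$-periodic, and then uses the fact that the multiplicity of $-1$ in $Sp(C_n)$ is exactly $2$ to conclude that these span the whole solution set, so no solution can be a bijection for $n>3$. Your differencing trick --- subtracting $w(v_{i+1})=k'$ from $w(v_i)=k'$ to get $\ell(v_{i-1})=\ell(v_{i+2})$ --- reaches the same $3$-periodicity directly and more cheaply; in fact it makes your first (spectral) step redundant, since for $3\nmid n$ the relation $\ell(v_j)=\ell(v_{j+3})$ forces $\ell$ to be constant (the orbit of $+3$ modulo $n$ is everything when $\gcd(3,n)=1$), which already contradicts bijectivity for all $n\geq 4$ in one stroke. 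What the paper's longer route buys is a worked illustration of Corollary \ref{piki} on the simplest possible example, which is the stated purpose of this proposition; what your route buys is a shorter, entirely elementary and self-contained argument. Your closing remark about the eigenvector version is essentially the paper's argument, so you correctly identified it as an alternative.
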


\begin{proof}
Assume $C_n$ is closed distance magic with magic constant $k^\prime$ and
denote its vertices by $x_0$, $x_1$, $\dots$, $x_{n-1}$. We can assume that
$n\equiv 0\, ({\rm mod}\, 3)$. Let us consider the following three labelings
of $V(G)$. For every $i\in \{0,1,2\} $ and $j\in \{0,\ldots ,n-1\}$ set
$$
\ell_i(x_j)=\left\{
\begin{array}{lll}
k', &\text{if} & j\equiv i\, ({\rm mod}\, 3),\\
0, &\text{if} & j\not\equiv i\, ({\rm mod}\, 3).
\end{array}
\right.
$$
Obviously these labelings satisfy the system (\ref{system1}) and are linearly
independent. Moreover for any linear combination $\ell$ of $\ell_0$, $\ell_1$
and $\ell_2$, we have $\ell(x_{j_1})=\ell(x_{j_2})$ for $j_1\equiv j_2\, ({\rm mod}\, 3)$,
so there is no bijection $\ell \colon V(G)\rightarrow \{1,\ldots ,n\}$ linearly
independent from $\ell_0$, $\ell_1$ and $\ell_2$ if $n>3$. As the multiplicity
of $-1$ in $Sp(C_n)$ is $2$, the cycle is not closed distance magic if $n>3$.
\end{proof}

The following result is a direct consequence of Corollary \ref{piki} and the
discussion before Proposition \ref{miki}.

\begin{mycorollary}\label{cor_3Sp-1}
If the product $G\boxtimes C_n$ is closed distance magic, where $G$ is a regular
graph, then $n\equiv 0\, ({\rm mod}\, 3)$ or $-1\in Sp(G)$.
\end{mycorollary}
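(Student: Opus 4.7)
The plan is to combine three ingredients already assembled in the paper: the regularity of a strong product of regular graphs, the spectrum formula for $G\boxtimes C_n$ expressed in terms of $Sp(G)$ and $Sp(C_n)$, and the characterization of $-1\in Sp(C_n)$ derived just before Proposition \ref{miki}.

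First, I would use regularity to force $-1$ into the spectrum of the product. Since $G$ is $r$-regular and $C_n$ is $2$-regular, every vertex of $G\boxtimes C_n$ has degree $3r+2$ (namely $r$ neighbors of the form $(g',c)$, $2$ of the form $(g,c')$, and $2r$ of the form $(g',c')$), so every closed neighborhood has size $3(r+1)$. Hence the constant labeling $\ell\equiv k'/(3(r+1))$ solves the system (\ref{system1}) for $G\boxtimes C_n$. This labeling is clearly not a bijection onto $\{1,\dots,n|V(G)|\}$, and no scalar multiple of it can be either. Applying Corollary \ref{piki} with $k=1$ therefore yields $-1\in Sp(G\boxtimes C_n)$.

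Next, I would invoke the observation that every eigenvalue of $G\boxtimes C_n$ has the form $(\lambda_G+1)(\lambda_{C_n}+1)-1$ for some $\lambda_G\in Sp(G)$ and $\lambda_{C_n}\in Sp(C_n)$. Imposing that this expression equal $-1$ forces $(\lambda_G+1)(\lambda_{C_n}+1)=0$, so $-1\in Sp(G)\cup Sp(C_n)$. By the discussion preceding Proposition \ref{miki}, the condition $-1\in Sp(C_n)$ is equivalent to $n\equiv 0\imod{3}$. Combining the two alternatives produces the stated dichotomy.

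I do not foresee any genuine obstacle: the corollary is a clean assembly of three facts already developed in the paper, and the only minor check is the degree calculation certifying that $G\boxtimes C_n$ is regular, which is immediate from the definition of the strong product.
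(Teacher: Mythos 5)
Your argument is correct and matches the paper's intended derivation: the paper states this corollary as a direct consequence of Corollary \ref{piki} (equivalently, Corollary \ref{neces_Sp-1} applied to the regular graph $G\boxtimes C_n$), the product-spectrum observation, and the characterization of $-1\in Sp(C_n)$ preceding Proposition \ref{miki}. Your degree computation and the factorization $(\lambda_G+1)(\lambda_{C_n}+1)=0$ are exactly the checks needed, so nothing is missing.
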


\begin{mycorollary}\label{cor_oddOdd}
Let the product $G\boxtimes C_n$ be closed distance magic, where $G$ is a $2d$-regular
graph on $m$ vertices. Then $n\equiv 1\, ({\rm mod}\, 2)$ and $m\equiv 1\, ({\rm mod}\, 2)$.
Moreover, $n\equiv 3\, ({\rm mod}\, 6)$ or $-1\in Sp(G)$.
\end{mycorollary}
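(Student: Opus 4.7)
The plan is to exploit two things: first, the integrality constraint on the magic constant for the regular graph $G\boxtimes C_n$, and second, the spectral necessary condition already established as Corollary~\ref{cor_3Sp-1}.

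First I would compute the regularity of $G\boxtimes C_n$. From the definition of the strong product, a vertex $(g,c)$ has exactly $d_G(g)+d_{C_n}(c)+d_G(g)d_{C_n}(c)$ neighbors, so with $G$ being $2d$-regular this gives degree $2d+2+4d = 6d+2$. The product has $mn$ vertices. Hence by Observation~\ref{cor_regular} the magic constant must equal
\begin{equation*}
k' = \frac{(6d+3)(mn+1)}{2} = \frac{3(2d+1)(mn+1)}{2}.
\end{equation*}
Since $3(2d+1)$ is odd, integrality of $k'$ forces $mn+1$ to be even, i.e. $mn$ is odd, which yields $n\equiv 1\pmod{2}$ and $m\equiv 1\pmod{2}$.

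For the second part I would invoke Corollary~\ref{cor_3Sp-1} directly: since $G$ is regular and $G\boxtimes C_n$ is closed distance magic, either $n\equiv 0\pmod{3}$ or $-1\in Sp(G)$. Combining this with the parity conclusion, if $-1\notin Sp(G)$ then $n$ is simultaneously odd and divisible by $3$, and the only residue modulo $6$ satisfying both constraints is $3$. Hence $n\equiv 3\pmod{6}$ or $-1\in Sp(G)$, as required.

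There is no real obstacle here; the whole statement is essentially a bookkeeping consequence of the two ingredients above (the integrality of $k'$ for the regular product, and the prior Corollary~\ref{cor_3Sp-1}). The only point that needs a touch of care is the degree computation in the strong product — one must remember to count the ``diagonal'' neighbors $(g',c')$ with $g'\sim g$ and $c'\sim c$ — and then the arithmetic identifying $n\equiv 3\pmod 6$ as the unique odd residue divisible by $3$ modulo $6$.
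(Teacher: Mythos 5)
Your proposal is correct and follows essentially the same route as the paper: compute that $G\boxtimes C_n$ is $(6d+2)$-regular, use Observation~\ref{cor_regular} to get $k'=3(2d+1)(mn+1)/2$, deduce the parity of $m$ and $n$ from integrality of $k'$, and then apply Corollary~\ref{cor_3Sp-1} to conclude. No issues.
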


\begin{proof} Let $k^\prime$ be the magic constant of $G\boxtimes C_n$. The strong product
of $r_1$- and $r_2$-regular graphs is $((r_1+1)(r_2+1)-1)$-regular, so by Observation
\ref{cor_regular} we have
\begin{equation*}
k^\prime=3(2d+1)(mn+1)/2.
\end{equation*}
This means that $m\equiv 1\, ({\rm mod}\, 2)$ and $n\equiv 1\, ({\rm mod}\, 2)$. Now if
$-1\not\in Sp(G)$, then by Corollary \ref{cor_3Sp-1}, $n\equiv 0\, ({\rm mod}\, 3)$
and in consequence $n\equiv 3\, ({\rm mod}\, 6)$. \end{proof}

Let $V(C_{m}\boxtimes C_{n})=\{v_{i,j}:0\leq i\leq m-1,0\leq j\leq n-1\}$, where
$$N(v_{i,j})=\{v_{i-1,j-1},v_{i-1,j},v_{i-1,j+1},v_{i,j-1},v_{i,j+1},v_{i+1,j-1},v_{i+1,j},v_{i+1,j+1}\}$$
and arithmetics on the first suffix is taken modulo $m$ and on the second suffix modulo $n$.
We also refer to the set of all vertices $v_{i,j}$ with fixed $i$ as $i$-th row and with
fixed $j$ as $j$-th column. Below we give the necessary and sufficient conditions for
$C_{m}\boxtimes C_{n}$ to be closed distance magic.

\begin{mytheorem}
\label{conjecture}The strong product $C_{m}\boxtimes C_{n}$ is closed distance magic
if and only if at least one of the following conditions holds:

\begin{enumerate}
\item $m\equiv 3\, ({\rm mod}\, 6)$ and $n\equiv 3\, ({\rm mod}\, 6)$.

\item $\{m,n\}=\{3,x\}$ and $x$ is an odd number.
\end{enumerate}
\end{mytheorem}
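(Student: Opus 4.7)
The plan is to prove necessity and sufficiency separately, with the bulk of the work devoted to the construction under condition~(1). For necessity, I apply Corollary~\ref{cor_oddOdd} to $G \boxtimes C_n$ with $G = C_m$ (which is $2$-regular on $m$ vertices) and conclude that $m, n$ are both odd and either $n \equiv 3 \pmod{6}$ or $-1 \in Sp(C_m)$. Since $-1 \in Sp(C_m)$ if and only if $3 \mid m$ (from the discussion preceding Proposition~\ref{miki}), and since the same corollary with the roles of $m$ and $n$ swapped yields the symmetric statement, both $m, n$ are odd and at least one of them is divisible by $3$. It remains to exclude $3 \mid m$, $m > 3$, $3 \nmid n$ (the symmetric partner is handled identically). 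Using the identity $A(C_m \boxtimes C_n) + I = (A(C_m) + I) \otimes (A(C_n) + I)$ together with $-1 \notin Sp(C_n)$, the null space of the matrix of system~(\ref{system1}) is exactly $\ker(A(C_m) + I) \otimes \mathbb{R}^n$. The recurrence $u(i-1) + u(i) + u(i+1) = 0$ defining $\ker(A(C_m) + I)$ forces $u(i+3) = u(i)$, so every real solution of~(\ref{system1}) is $3$-periodic in the first coordinate; but $m > 3$ together with $3 \mid m$ then causes the distinct vertices $v_{i, j}$ and $v_{i+3, j}$ to always receive the same label, contradicting the injectivity of any closed distance magic labeling.

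For sufficiency, condition~(2) is immediate: taking $m = 3$ without loss of generality, $C_3 \boxtimes C_n = K_3 \boxtimes C_n$ is closed distance magic by Theorem~\ref{magic_product_G_K} (with the theorem's ``$n$'' equal to $3$ and its ``$G$'' equal to $C_x$, a $2$-regular graph of odd order). Under condition~(1) the subcase $m = 3$ or $n = 3$ reduces to condition~(2), so I may assume $m = 3m'$, $n = 3n'$ with $m', n' \ge 3$ both odd. The key combinatorial input is that for every odd $m' \ge 3$ there exist three permutations $F^0, F^1, F^2$ of $\{0, \ldots, m'-1\}$ with constant column sum $S_F = 3(m'-1)/2$: set $F^0(A) = A$, $F^1(A) = (A + (m'-1)/2) \bmod m'$, and $F^2(A) = S_F - F^0(A) - F^1(A)$, and a direct case split on whether $A < (m'+1)/2$ shows that $F^2$ enumerates the even elements and then the odd elements of $\{0, \ldots, m'-1\}$ and is therefore a permutation. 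Taking analogous $G^0, G^1, G^2$ on $\{0, \ldots, n'-1\}$ with column sum $S_G = 3(n'-1)/2$, and any bijection $k \colon \{0, 1, 2\}^2 \to \{0, 1, \ldots, 8\}$, I define
\[
\ell\bigl(v_{3A + a,\, 3B + b}\bigr) = n' F^{(a+b) \bmod 3}(A) + G^{(a+b) \bmod 3}(B) + m' n' k(a, b) + 1.
\]

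Bijectivity is straightforward: for each fixed $(a, b)$, as $(A, B)$ varies, the map $(A, B) \mapsto n' F^{(a+b) \bmod 3}(A) + G^{(a+b) \bmod 3}(B)$ is a bijection onto $\{0, \ldots, m'n' - 1\}$, so the labels on cell $(a, b)$ fill the block $\{m'n'\, k(a, b) + 1, \ldots, m'n'(k(a, b)+1)\}$, and the nine blocks tile $\{1, \ldots, mn\}$. For the magic condition the key observation is that for every $(a_0, b_0)$ the nine vertices of $N[v_{i_0, j_0}]$ have their micro-coordinates $(a, b)$ sweeping each element of $\{0, 1, 2\}^2$ exactly once, since $\{a_0 - 1, a_0, a_0 + 1\}$ is a full set of residues modulo $3$ (and likewise for $b$). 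These nine vertices spread across one, two, or four adjacent macro-cells $(A, B)$ depending on $(a_0, b_0)$, but because $F_{a, b} = F^{(a+b) \bmod 3}$ depends only on the diagonal residue modulo $3$, each macro-cell appearing in the block contributes each of $F^0, F^1, F^2$ with equal multiplicity; the column-sum property then absorbs the $A_0$- and $B_0$-dependence, and the block sum collapses to $3 n' S_F + 3 S_G + 36 m' n' + 9 = 9(mn + 1)/2 = k'$, as required. The main technical obstacle is this case analysis on the nine possible $(a_0, b_0)$, since each position distributes the macro-cells differently, but the cyclic choice $F^{(a+b) \bmod 3}$ reduces every case to the same arithmetic identity.
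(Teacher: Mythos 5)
Your proof is correct, but it takes a genuinely different route from the paper in both directions. For necessity, the paper also starts from Corollary~\ref{cor_oddOdd}, but then (in its Lemma~\ref{notdm}) exhibits $2m+1$ explicit linearly independent solutions of system~(\ref{system1}) whose linear combinations cannot be bijections and compares with the multiplicity $2m$ of $-1$ in the spectrum; you instead factor $\mathbf{A}(C_m\boxtimes C_n)+\mathbf{I}=(\mathbf{A}(C_m)+\mathbf{I})\otimes(\mathbf{A}(C_n)+\mathbf{I})$, use invertibility of the second factor when $3\nmid n$, and read off that every kernel vector is $3$-periodic in the $C_m$-coordinate, which kills injectivity when $3\mid m$, $m>3$. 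This is cleaner and avoids the multiplicity bookkeeping; the only step you gloss is passing from the homogeneous kernel to all solutions of the inhomogeneous system, which needs the (trivial) remark that the constant vector $k'/9\cdot\mathbf{u}$ is a $3$-periodic particular solution, or equivalently that $\ell-(k'/9)\mathbf{u}$ lies in the kernel. For sufficiency under condition (1), the paper partitions $\{1,\dots,m\}$ and $\{1,\dots,n\}$ into disjoint triples of equal sum (Observation~\ref{triples_sum}), builds two labelings $\ell_1,\ell_2$ constant on $3\times 1$ and $1\times 3$ patterns, and combines them as $\ell=(\ell_1-1)n+\ell_2$; you instead use a different gadget, namely three permutations $F^0,F^1,F^2$ of $\{0,\dots,m'-1\}$ with constant column sum (your explicit $F^1,F^2$ do check out: $F^2$ enumerates the even and then the odd residues), select the permutation by the diagonal residue $(a+b)\bmod 3$, and assemble via a mixed-radix formula with the nine blocks $m'n'k(a,b)$. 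Your magic-sum verification in fact needs no case analysis at all: for each fixed micro-row $a$ the macro-coordinate $A_{(a)}$ is fixed, and summing over $b$ cycles through $F^0,F^1,F^2$ at that same argument, giving $S_F$ uniformly (and symmetrically for $G$), so the weight is $3n'S_F+3S_G+36m'n'+9=9(mn+1)/2$ as you state. Both constructions rely on the same ``equal triple sums plus positional combination'' spirit, but the combinatorial objects and the bijectivity arguments differ; the paper's has the advantage of reusing its Observation~\ref{triples_sum} verbatim, while yours is more uniform and makes the constancy of the weight a one-line computation.
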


\begin{proof}\
The following Lemma gives the necessary conditions for $C_{m}\boxtimes C_{n}$ to
be closed distance magic.

\begin{mylemma}
\label{notdm}If $C_{m}\boxtimes C_{n}$ is a closed distance magic graph, then at
least one of the following conditions holds:

\begin{enumerate}
\item $m\equiv 3\, ({\rm mod}\, 6)$ and $n\equiv 3\, ({\rm mod}\, 6)$.
\item $\{m,n\}=\{3,x\}$ and $x$ is an odd number.
\end{enumerate}
\end{mylemma}

\begin{proof} Assume that $C_{m}\boxtimes C_{n}$ is a closed distance magic graph with some magic constant $k^{\prime }$. Hence there exists a closed distance magic labeling $\ell :V(C_{m}\boxtimes C_{n})\rightarrow \{1,\ldots ,mn\}$. By Corollary \ref{cor_oddOdd}, $m$ and $n$ are odd. Assume that $m\not\equiv 3\, ({\rm mod}\, 6)$.

Let us consider the following $2m+1$ solutions of the system (\ref{system1}). For $i\in\{0,\dots,m-1\}$, $j\in\{0,\dots,n-1\}$ and $s\in\{1,\dots,m\}$ set

$$
\ell_s(x_{ij})=\left\{
\begin{array}{ll}
k'/3 & \text{if } i=s \wedge j\equiv 0\, ({\rm mod}\, 3),\\
k'/3 & \text{if } i\neq s \wedge j\equiv 1\, ({\rm mod}\, 3),\\
0 & \text{otherwise},
\end{array}
\right.
$$

$$
\ell_{m+s}(x_{ij})=\left\{
\begin{array}{ll}
k'/3 & \text{if } i=s \wedge j\equiv 1\, ({\rm mod}\, 3),\\
k'/3 & \text{if } i\neq s \wedge j\equiv 2\, ({\rm mod}\, 3),\\
0 & \text{otherwise},
\end{array}
\right.
$$

$$
\ell_{2m+1}(x_{ij})=\left\{
\begin{array}{ll}
k'/3 & \text{if } j\equiv 0\, ({\rm mod}\, 3),\\
0 & \text{otherwise}.
\end{array}
\right.
$$

The first $2m$ solutions are linearly independent as the value of $\ell(x_{ij})$ for $i\in\{0,\dots,m\}$, $j\equiv x\, ({\rm mod}\, 3)$, $x\in\{0,1\}$, is not equal to $0$ only in the solution $\ell_{i+(j\mod 3)m}$. The last solution is not a linear combination of the other ones as in order to obtain $\ell(x_{ij})=k^\prime/3$ for $j\equiv 0\, ({\rm mod}\, 3)$ and $\ell(x_{ij})=0$ for $j\equiv 1\, ({\rm mod}\, 3)$ we should have $\ell_{2m+1}(x)=\sum_{i=1}^{m}{(\ell_i(x)-(m-1)\ell_{m+i}(x)}$ for every $x$, which obviously does not result with $\ell(x_{ij})=0$ for $j\equiv 2\, ({\rm mod}\, 3)$. Moreover no bijection $\ell \colon V(G)\rightarrow \{1,\ldots ,n\}$ can be a linear combination of $\ell_i$, $i\in\{1,\dots,2m+1\}$ for $n>3$ as for any $\alpha_1,\dots,\alpha_{2m+1}$ we have $\sum_{s=1}^{2m+1}{\alpha_s\ell_s(x_{ij_1})}=\sum_{s=1}^{2m+1}{\alpha_s\ell_s(x_{ij_2})}$ for $j_1\equiv j_2\, ({\rm mod}\, 3)$. As the multiplicity of $-1$ in $Sp(G)$ is $2m$, $G=C_m\boxtimes C_n$ is not closed distance magic. This implies that $n=3$. Moreover $m$ is an odd number as noted before.
\end{proof}

The sufficiency of the second condition follows from Theorem \ref{magic_product_G_K}, as $C_3\cong K_3$. In order to finish the proof of the theorem, we are going to show that there exists a closed distance magic labeling of $C_{m}\boxtimes C_{n}$ for any $m\equiv 3\, ({\rm mod}\, 6)$ and $n\equiv 3\, ({\rm mod}\, 6)$.

First let us observe the following.

\begin{myobservation}\label{triples_sum}
For every $m\equiv 3 \, ({\rm mod}\, 6)$ one can divide the set $\{1,\dots,m\}$ into $m/3$ mutually disjoint triples such that the sum of the elements of each triple equals to $3(m+1)/2$.
\end{myobservation}

\begin{proof} The desired triples are $(2i+1,\frac{m+1}{2}-i,m-i)$ for $i\in \{0,\dots,\frac{m-3}{6}\}$ and $(2i+2,\frac{2m}{3}-i,\frac{5m-3}{6}-i)$ for $i\in\{0,\dots,\frac{m-9}{6}\}$.
\end{proof}

Let us prove the existence of the desired labeling.

\begin{mylemma}
\label{suff_cond}If $m\equiv 3\, ({\rm mod}\, 6)$ and $n\equiv 3\, ({\rm mod}\, 6)$, then $C_{m}\boxtimes C_{n}$ is a closed distance magic graph.
\end{mylemma}

\begin{proof} Let us denote the triples granted by Observation \ref{triples_sum} for the set $\{1,\dots,m\}$ by $S_0,\dots,S_{m/3-1}$ and the elements of a triple $S_p$, $p\in\{0,\dots, m/3-1\}$, by $s_p^0$, $s_p^1$ and $s_p^2$. Similarly, let the triples for the set $\{1,\dots,n\}$ be $T_0,\dots,T_{n/3-1}$ and denote the elements of a triple $T_q$, $q\in\{0,\dots, n/3-1\}$, by $t_q^0$, $t_q^1$ and $t_q^2$.

Let us define two following labelings of $C_{m}\boxtimes C_{n}$:
$$
\ell_1(v_{ij})=s_{\lfloor i/3\rfloor}^{j\, ({\rm mod}\, 3)},i\in\{0,\ldots,m-1\}, j\in\{0,\ldots,n-1\},
$$

$$
\ell_2(v_{ij})=t_{\lfloor j/3\rfloor}^{i\, ({\rm mod}\, 3)},i\in\{0,\ldots,m-1\}, j\in\{0,\ldots,n-1\}.
$$

In other words, in order to construct the labeling $\ell_1$ we put the elements of $S_0$ on consecutive triples of vertices in row $0$, and then we label rows $1$ and $2$ in the same way. To label rows $3$, $4$ and $5$ we use $S_1$, next three rows we label with $S_2$ and so on. Similarly the triples $T_0, T_1,\ldots$ are used to label the columns of $C_{m}\boxtimes C_{n}$. As an example, below we present the labelings $\ell_1$ and $\ell_2$ in the case of $C_{15}\boxtimes C_9$. In this case the partition of $\{1,2,\dots,15\}$ is $(1,8,15)$, $(3,7,14)$, $(5,6,13)$, $(2,10,12)$, $(4,9,11)$ and the partition of $\{1,2,\dots,9\}$ is $(1,5,9)$, $(3,4,8)$, $(2,6,7)$.

%$$
\begin{equation*}
\begin{array}{ccccccccc}
\ell _{1}: &  &  &  &  &  &  &  &  \\
1 & 8 & 15 & 1 & 8 & 15 & 1 & 8 & 15 \\
1 & 8 & 15 & 1 & 8 & 15 & 1 & 8 & 15 \\
1 & 8 & 15 & 1 & 8 & 15 & 1 & 8 & 15 \\
3 & 7 & 14 & 3 & 7 & 14 & 3 & 7 & 14 \\
3 & 7 & 14 & 3 & 7 & 14 & 3 & 7 & 14 \\
3 & 7 & 14 & 3 & 7 & 14 & 3 & 7 & 14 \\
5 & 6 & 13 & 5 & 6 & 13 & 5 & 6 & 13 \\
5 & 6 & 13 & 5 & 6 & 13 & 5 & 6 & 13 \\
5 & 6 & 13 & 5 & 6 & 13 & 5 & 6 & 13 \\
2 & 10 & 12 & 2 & 10 & 12 & 2 & 10 & 12 \\
2 & 10 & 12 & 2 & 10 & 12 & 2 & 10 & 12 \\
2 & 10 & 12 & 2 & 10 & 12 & 2 & 10 & 12 \\
4 & 9 & 11 & 4 & 9 & 11 & 4 & 9 & 11 \\
4 & 9 & 11 & 4 & 9 & 11 & 4 & 9 & 11 \\
4 & 9 & 11 & 4 & 9 & 11 & 4 & 9 & 11%
\end{array}%
%\end{equation*}%
\qquad
%\begin{equation*}
\begin{array}{ccccccccc}
\ell _{2}: &  &  &  &  &  &  &  &  \\
1 & 1 & 1 & 3 & 3 & 3 & 2 & 2 & 2 \\
5 & 5 & 5 & 4 & 4 & 4 & 6 & 6 & 6 \\
9 & 9 & 9 & 8 & 8 & 8 & 7 & 7 & 7 \\
1 & 1 & 1 & 3 & 3 & 3 & 2 & 2 & 2 \\
5 & 5 & 5 & 4 & 4 & 4 & 6 & 6 & 6 \\
9 & 9 & 9 & 8 & 8 & 8 & 7 & 7 & 7 \\
1 & 1 & 1 & 3 & 3 & 3 & 2 & 2 & 2 \\
5 & 5 & 5 & 4 & 4 & 4 & 6 & 6 & 6 \\
9 & 9 & 9 & 8 & 8 & 8 & 7 & 7 & 7 \\
1 & 1 & 1 & 3 & 3 & 3 & 2 & 2 & 2 \\
5 & 5 & 5 & 4 & 4 & 4 & 6 & 6 & 6 \\
9 & 9 & 9 & 8 & 8 & 8 & 7 & 7 & 7 \\
1 & 1 & 1 & 3 & 3 & 3 & 2 & 2 & 2 \\
5 & 5 & 5 & 4 & 4 & 4 & 6 & 6 & 6 \\
9 & 9 & 9 & 8 & 8 & 8 & 7 & 7 & 7%
\end{array}%
\end{equation*}%$$

Observe that for every two vertices $v_{i_1j_1}\neq v_{i_2j_2}$, we have $(\ell_1(v_{i_1j_1}),\ell_2(v_{i_1j_1}))\neq (\ell_1(v_{i_2j_2}),\ell_2(v_{i_2j_2}))$.
Indeed, if $(\ell_1(v_{i_1j_1}),\ell_2(v_{i_1j_1}))= (\ell_1(v_{i_2j_2}),\ell_2(v_{i_2j_2}))$, then we have $s_{\lfloor i_1/3\rfloor}^{j_1\, ({\rm mod}\, 3)}=s_{\lfloor i_2/3\rfloor}^{j_2\, ({\rm mod}\, 3)}$ and $t_{\lfloor j_1/3\rfloor}^{i_1\, ({\rm mod}\, 3)}=t_{\lfloor j_2/3\rfloor}^{i_2\, ({\rm mod}\, 3)}$. This implies in turn that $\lfloor i_1/3\rfloor=\lfloor i_2/3\rfloor$ and $i_1\, ({\rm mod}\, 3)=i_2\, ({\rm mod}\, 3)$ and finally $i_1=i_2$. Similarly we can deduce that $j_1=j_2$.

Let us define the labeling $\ell:V(C_{m}\boxtimes C_{n})\rightarrow\{1,\dots,mn\}$ as
$$
\ell(x)=(\ell_1(x)-1)n+\ell_2(x).
$$

It is straightforward to see that $\ell$ is a bijection. The labeling $\ell$ of $C_{15}\boxtimes C_{9}$ is given below.

$$
\begin{array}{ccccccccc}
\ell &&&&&&&&\\
1 & 64 & 127 & 3 & 66 & 129 & 2 & 65 & 128\\
5 & 68 & 131 & 4 & 67 & 130 & 6 & 69 & 132\\
9 & 72 & 135 & 8 & 71 & 134 & 7 & 70 & 133\\
19 & 55 & 118 & 21 & 57 & 120 & 20 & 56 & 119\\
23 & 59 & 122 & 22 & 58 & 121 & 24 & 60 & 123\\
27 & 63 & 126 & 26 & 62 & 125 & 25 & 61 & 124\\
37 & 46 & 109 & 39 & 48 & 111 & 38 & 47 & 110\\
41 & 50 & 113 & 40 & 49 & 112 & 42 & 51 & 114\\
45 & 54 & 117 & 44 & 53 & 116 & 43 & 52 & 115\\
10 & 82 & 100 & 12 & 84 & 102 & 11 & 83 & 101\\
14 & 86 & 104 & 13 & 85 & 103 & 15 & 87 & 105\\
18 & 90 & 108 & 17 & 89 & 107 & 16 & 88 & 106\\
28 & 73 & 91 & 30 & 75 & 93 & 29 & 74 & 92\\
32 & 77 & 95 & 31 & 76 & 94 & 33 & 78 & 96\\
36 & 81 & 99 & 35 & 80 & 98 & 34 & 79 & 97
\end{array}
$$

On the other hand observe that for every $x\in V(C_{m}\boxtimes C_{n})$ we have
$$
\sum_{y\in N[x]}{\ell_1(y)}=3(s_0^0+s_0^1+s_0^2)=9(m+1)/2
$$
and
$$
\sum_{y\in N[x]}{\ell_2(y)}=3(t_0^0+t_0^1+t_0^2)=9(n+1)/2.
$$

It means that for every $x\in V(C_{m}\boxtimes C_{n})$
$$
\begin{array}{l}
\sum_{y\in N[x]}{\ell(y)}=\sum_{y\in N[x]}(\ell_1(y)-1)n+\ell_2(y)\\
=n\sum_{y\in N[x]}\ell_1(y)-n(d(x)+1)+\sum_{y\in N[x]}\ell_2(y)\\
=9n(m+1)/2-9n+9(n+1)/2=9(mn+1)/2=k^\prime,
\end{array}
$$
so $\ell$ is closed distance magic labeling of $C_{m}\boxtimes C_{n}$.
\end{proof}

Clearly the last lemma finishes the proof of Theorem \ref{conjecture}.
\end{proof}

%%%%%%%%%%%%%%%%%%%%%%%%%%%%%%%%%%%%%%%%%%%%%%%%%%%%%%%
%%%%%%%%%%%%%%%%%%%%%%%%%%%%%%%%%%%%%%%%%%%%%%%%%%%%%%%
%%%%%%%%%%%%%%%%%%%%%%%%%%%%%%%%%%%%%%%%%%%%%%%%%%%%%%%

\section{Circulant graphs}

The circulant graph $Ci(n,S)$, where $S\subset \{1,\dots,\lfloor n/2 \rfloor\}$ is the graph with vertex set $\{v_0,\ldots,v_{n-1}\}$, where $v_i$ and $v_j$ are adjacent if and only if $|i-j|\in S$. Observe that each closed distance magic labeling of $Ci(n,S)$ is exactly a $D$-distance magic labeling of a cycle $C_n$, where $D=S\cup \{0\}$. In particular, in \cite{ref_SimElv} Simanjuntak et al. have proved two following facts.

\begin{myproposition}[\cite{ref_SimElv}, Corollary $2$] For a positive integer $k$, the circulant graph $Ci(n,\{1,\ldots,k-1,k+1,\ldots,\left\lfloor\frac{n}{2}\right\rfloor\})$ is closed distance magic if and only if
$n= 4k$.
\end{myproposition}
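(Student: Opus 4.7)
The approach is to bypass any spectral computation and work directly with the combinatorial form of the magic equations for this circulant graph. Writing $S=\{1,\dots,\lfloor n/2\rfloor\}\setminus\{k\}$, the closed neighborhood of $v_i$ in $Ci(n,S)$ is $V\setminus\{v_{i+k},v_{i-k}\}$ (indices modulo $n$) when $k<n/2$, and $V\setminus\{v_{i+k}\}$ when $k=n/2$. In the latter degenerate case the magic condition immediately forces $\ell(v_{i+k})$ to be constant, contradicting bijectivity of $\ell$ whenever $n\geq 2$. So I would assume $k<n/2$; setting $\sigma=n(n+1)/2$ and $c=\sigma-k'$, the system reduces to the single clean form $\ell(v_j)+\ell(v_{j+2k})=c$ for every $j\in\mathbb{Z}_n$.

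For necessity I would carry out an orbit analysis of the translation-by-$2k$ action on $\mathbb{Z}_n$. Each orbit is a coset of $\langle 2k\rangle$ of size $s=n/\gcd(n,2k)$; iterating the constraint along an orbit shows that $\ell$ alternates between $a=\ell(v_j)$ and $c-a$, so $\ell$ takes at most two distinct values on each orbit, and $s$ odd additionally forces $a=c-a$ and hence $\ell\equiv c/2$ there. For $\ell$ to be a bijection onto $\{1,\dots,n\}$ every orbit must therefore have size exactly $2$: this means $n\mid 4k$ and $n\nmid 2k$. Writing $4k=nt$, the hypothesis $2k\leq n$ gives $t\leq 2$, while $t=2$ corresponds to $n\mid 2k$, which is excluded. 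Hence $t=1$ and $n=4k$.

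For sufficiency, when $n=4k$ the $+2k$-orbits are precisely the $2k$ antipodal pairs $\{j,j+2k\}$, $j\in\{0,\dots,2k-1\}$. I would then partition $\{1,\dots,4k\}$ into the pairs $\{i,4k+1-i\}$ for $i=1,\dots,2k$ and use each such pair to label one orbit (in either order). The resulting $\ell$ is a bijection satisfying $\ell(v_j)+\ell(v_{j+2k})=4k+1=c$, so every vertex weight equals $\sigma-c=(2k-1)(4k+1)$, which matches Observation~\ref{cor_regular} applied to the $(4k-3)$-regular graph on $4k$ vertices and confirms that $\ell$ is closed distance magic. The main obstacle is the orbit analysis itself, which must simultaneously rule out $s=1$, $s$ odd with $s\geq 3$, and $s$ even with $s\geq 4$; once $s=2$ is forced, both directions are short arithmetic.
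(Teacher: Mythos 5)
Your proof is correct. Note first that the paper does not prove this proposition at all: it is quoted verbatim from Simanjuntak et al.\ (\cite{ref_SimElv}, Corollary 2) as an imported result, so there is no in-paper argument to compare against. Your argument is a clean, self-contained replacement. The key move --- observing that the graph is the complement of $Ci(n,\{k\})$ within $K_n$, so that the magic condition collapses to $\ell(v_j)+\ell(v_{j+2k})=c$ for all $j$, followed by the orbit analysis of translation by $2k$ on $\mathbb{Z}_n$ --- handles all cases ($s=1$, $s$ odd $\geq 3$, $s$ even $\geq 4$) correctly, and the degenerate case $k=n/2$ is properly disposed of. The sufficiency construction via antipodal pairs $\{i,4k+1-i\}$ is valid (the pairs are genuine two-element sets since $4k+1$ is odd), and your check against Observation~\ref{cor_regular} for the $(4k-3)$-regular graph on $4k$ vertices is accurate. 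It is worth contrasting your route with the machinery the paper actually develops for circulants: Section~\ref{algeb} and Proposition~\ref{prop_Ci2} would approach necessity by locating $-1$ in the spectrum and counting its multiplicity against a supply of non-bijective solutions, which is heavier and only yields one direction; your direct combinatorial reduction gives both directions at once. The only caveat, which concerns the statement rather than your proof, is that one must implicitly assume $1\leq k\leq\lfloor n/2\rfloor$ (otherwise the connection set is all of $\{1,\dots,\lfloor n/2\rfloor\}$, the graph is complete, and the ``only if'' direction fails); you use this tacitly when you invoke $2k\leq n$ to bound $t\leq 2$.
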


\begin{myproposition}[\cite{ref_SimElv}, Theorem $6$]
For $n\geq 2k+2$ the circulant graph $Ci(n,\{1,\ldots,k\})$ is not closed distance magic.
\end{myproposition}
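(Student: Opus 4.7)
The plan is to use the sliding window structure of $Ci(n,\{1,\dots,k\})$ together with the algebraic idea from Section \ref{algeb}. First I would observe that, writing indices modulo $n$, the closed neighborhood of $v_i$ in $Ci(n,\{1,\dots,k\})$ is the block of $2k+1$ consecutive vertices
\[
N[v_i]=\{v_{i-k},v_{i-k+1},\dots,v_{i+k}\},
\]
so in particular the graph is $2k$-regular when $n\geq 2k+2$.

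Next, I would assume for contradiction that $\ell\colon V\to\{1,\dots,n\}$ is a closed distance magic labeling with magic constant $k'$. Applying the $i$-th and $(i+1)$-st equations of system (\ref{system1}) and subtracting, all terms except two cancel, yielding
\[
\ell(v_{i+k+1})-\ell(v_{i-k})=0
\]
for every $i$. Re-indexing, this gives $\ell(v_j)=\ell(v_{j+2k+1})$ for all $j\in\{0,\dots,n-1\}$, with indices taken modulo $n$.

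Setting $d=\gcd(2k+1,n)$, iterated application of the relation shows that $\ell$ is constant on each coset of the subgroup $d\mathbb{Z}_n\leq\mathbb{Z}_n$ indexed by $\{v_j:j\equiv r\imod{d}\}$. Since $n\geq 2k+2>2k+1\geq d$, each such coset has size $n/d\geq 2$, so $\ell$ assigns equal labels to at least two distinct vertices, contradicting the fact that $\ell$ is a bijection onto $\{1,\dots,n\}$.

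The argument is short and I do not anticipate a serious obstacle; the only point that needs care is the passage from the "period $2k+1$" relation to the "period $\gcd(2k+1,n)$" relation, which is a standard cyclic-group fact but must be stated correctly so that the inequality $n/d\geq 2$ is transparent under the assumption $n\geq 2k+2$.
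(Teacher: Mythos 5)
Your argument is correct, and it is essentially the technique the paper itself uses for the more general Lemma~\ref{Syl1} (whose case $c=1$ recovers this proposition): subtracting the weight equations of two vertices whose closed neighborhoods differ in only two elements forces $\ell(v_j)=\ell(v_{j+2k+1})$ for all $j$, which contradicts bijectivity. The detour through $d=\gcd(2k+1,n)$ is not needed — since $0<2k+1<n$, the single relation $\ell(v_0)=\ell(v_{2k+1})$ already exhibits two distinct vertices with equal labels.
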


We generalize the last result by the following observation.

\begin{mylemma}\label{Syl1}
Let $G=Ci(n,\{c,2c,\ldots,kc\})$, where $k\geq 1$. If $G$ is closed distance magic, then $n=2kc$ or $n=(2k+1)c$.
\end{mylemma}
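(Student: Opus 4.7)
Plan: The strategy is to compare the closed-neighborhood sums at two nearby vertices $v_i$ and $v_{i+c}$. Let $\ell$ be a closed distance magic labeling of $G$ with magic constant $k'$, let $d = \gcd(c, n)$, and write $s_i = \sum_{y \in N[v_i]} \ell(y) = k'$.

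First, if $n = 2kc$ the conclusion holds immediately. Otherwise $n > 2kc$, so $kc < n/2$ and $N[v_i] = \{v_{i+mc} : m = -k, \ldots, k\}$ consists of $2k+1$ distinct vertices; moreover $n/d \geq n/c > 2k$, so $n/d \geq 2k+1$. In the subcase $n/d \geq 2k+2$, the neighborhoods $N[v_i]$ and $N[v_{i+c}]$ differ in exactly two vertices, namely $v_{i-kc}$ (only in $N[v_i]$) and $v_{i+(k+1)c}$ (only in $N[v_{i+c}]$) --- any further coincidence among these candidate vertices and the opposite neighborhood would require $n/d \leq 2k+1$. From $s_{i+c} = s_i = k'$ we therefore deduce $\ell(v_{i+(k+1)c}) = \ell(v_{i-kc})$ for every $i$, so $\ell$ is constant on the orbits of the translation $v_j \mapsto v_{j+(2k+1)c}$. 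Since $\ell$ is a bijection onto $\{1, \ldots, n\}$, every such orbit must be a singleton; hence $n \mid (2k+1)c$, equivalently $n/d \mid 2k+1$, contradicting $n/d \geq 2k+2$.

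Consequently $n/d = 2k+1$, giving $n = (2k+1)d$. Combining this with $2kc \leq n$ yields $c \leq (2k+1)d/(2k) < 2d$, and since $d \mid c$ we must have $c = d$, hence $n = (2k+1)c$, as claimed. The main technical obstacle is the boundary regime $n/d = 2k+1$: in that case the two candidate vertices $v_{i-kc}$ and $v_{i+(k+1)c}$ coincide and in fact $N[v_i] = N[v_{i+c}]$, so the sum-difference argument yields no information; fortunately the divisibility $d \mid c$ together with $n \geq 2kc$ is still enough to pin down $c = d$ and complete the proof.
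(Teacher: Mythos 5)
Your proof is correct and follows essentially the same route as the paper: both compare the weights of $v_i$ and $v_{i+c}$, whose closed neighborhoods differ only in $v_{i-kc}$ and $v_{i+(k+1)c}$, and then use injectivity of $\ell$ to force $(2k+1)c\equiv 0\pmod n$. Your version is somewhat more careful than the paper's, since the case split on $n/\gcd(c,n)$ explicitly handles the degenerate situation where the two extremal vertices coincide (which the paper glosses over), but the underlying idea is identical.
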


\begin{proof}
Suppose that the graph $G$ is closed distance magic with the closed  distance magic labeling $\ell$, then we obtain that $w(v_{kc})-w(v_{(k+1)c})=0$.
Hence for $n>2kc$ we obtain $\ell(v_{0})-\ell(v_{(2k+1)c})=0$, where the operation on the suffix is taken
modulo $n$. If $n>2kc$, this implies immediately that $(2k+1)c \equiv 0 \pmod {n}$ and in consequence $n=(2k+1)c$. On the other hand, by definition of $G$ we have $n\geq 2kc$ and the conclusion follows.
\end{proof}

%\begin{myobservation} Let $G=Ci(n,\{c,2c,\ldots,kc\})$. If
%\begin{itemize}
%\item
%$n>(2k+1)c$, or
%\item
%$(2k+1)c \not \equiv0 \pmod {n}$ and $n \neq 2kc$
%\end{itemize}
%then $G$ is not closed distance magic.\label{Syl1}
%\end{myobservation}

%\prf
%Suppose that the graph $G$ is closed distance magic with the closed  distance magic labeling $\ell$, then we obtain that $w(v_{kc})-w(v_{(k+1)c})=0$.
%Hence for $n>2kc$ we obtain $\ell(v_{0})-\ell(v_{(2k+1)c})=0$, where the operation on the suffix is taken
%modulo $n$.  This implies immediately that $n\leq (2k+1)c$ and $(2k+1)c \equiv 0 \pmod {n}$.   or  $(2k+1)c \not \equiv 0 \pmod {n}$ and $n \neq 2kc$) we obtain a contradiction. ~\end{proof}

Two following observations give sufficient conditions for the existence of a closed distance magic labeling of $G= Ci(n,\{c,2c,\ldots,kc\})$.

\begin{mylemma} \label{Syl2}
Let $G=Ci(n,\{c,2c,\ldots,kc\})$. If $n=2kc$ then $G$ is closed distance magic.
\end{mylemma}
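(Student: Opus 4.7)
The plan is to show that the hypothesis $n=2kc$ forces $G$ to split into a disjoint union of complete graphs, after which producing a closed distance magic labeling reduces to a trivial partition problem.

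First I would use the algebraic structure of the connection set. Every edge of $G$ joins two vertices whose indices differ modulo $n$ by an element of $\pm\{c,2c,\dots,kc\}$, i.e.\ by a multiple of $c$. Hence the components are supported on the cosets of the cyclic subgroup $\langle c\rangle\leq\mathbb{Z}_n$; there are $\gcd(c,n)=c$ such cosets, each of size $n/c=2k$. Fix the coset containing $0$ and consider $N[v_0]$. Its elements are $v_0$, $v_{jc}$ for $j=1,\ldots,k$, and $v_{-jc}$ for $j=1,\ldots,k-1$ (note that $v_{-kc}=v_{kc}$ because $kc=n/2$, so it is not counted twice). Since $-jc\equiv(2k-j)c\imod{n}$, these are exactly the $2k$ vertices of the coset $\langle c\rangle$. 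Thus $v_0$ is adjacent to every other vertex of its coset, and by vertex-transitivity the same holds for every vertex. Therefore $G\cong cK_{2k}$.

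Next I would reduce the labeling problem. In $cK_{2k}$ the closed neighborhood of any vertex coincides with its entire component, so a bijection $\ell\colon V(G)\to\{1,\dots,2kc\}$ is closed distance magic precisely when the $c$ component sums are all equal. Since these sums partition $1+2+\cdots+2kc=kc(2kc+1)$, the common value is forced to be $k'=k(2kc+1)$, which also matches Observation \ref{cor_regular} applied to the $(2k-1)$-regular graph $G$.

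Finally I would exhibit such a partition. Pair up the labels as $\{i,\,2kc+1-i\}$ for $i=1,\ldots,kc$; each pair sums to $2kc+1$. Distributing these $kc$ pairs into $c$ blocks of $k$ pairs each yields $c$ label sets of size $2k$, all with the same sum $k(2kc+1)$. Assigning one block to each $K_{2k}$-component gives the required closed distance magic labeling. The main (and only) obstacle is the structural step: showing $G\cong cK_{2k}$. The hypothesis $n=2kc$ is used precisely at the point where $-kc\equiv kc$ is needed so that the closed neighborhood exhausts a full coset; once this is clear, the magic labeling is a one-line pairing argument requiring no appeal to Theorem \ref{rectangles}.
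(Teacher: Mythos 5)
Your proof is correct and is essentially the paper's argument: the paper's explicit labeling $\ell(v_i)=i+1$, $\ell(v_{kc+i})=n-i$ for $i\in\{0,\dots,kc-1\}$ is exactly one realization of your pairing $\{i,\,n+1-i\}$ placed on the two antipodal vertices of each clique, giving every closed neighborhood $k$ pairs and weight $k(n+1)$. The only difference is presentational: you first make the isomorphism $G\cong cK_{2k}$ explicit, which the paper records only afterwards in the corollary following the theorem.
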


\begin{proof}
Let $\ell(v_i)=i+1$, $\ell(v_{ck+i})=n-i$ for $i\in\{0,1,\ldots,ck-1\}$.
Obviously $\ell $ is a bijection and moreover for each $j\in\{0,\ldots,n-1\}$ we have $w(x_j)=k(n+1)$.
\end{proof}

\begin{mylemma} \label{Syl3}
Let $G=Ci(n,\{c,2c,\ldots,kc\})$ and $n=(2k+1)c$.  A graph $G$ is closed distance magic if and only if $c$ is odd.
\end{mylemma}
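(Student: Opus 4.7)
The plan is to reduce the problem to partitioning $\{1,\ldots,(2k+1)c\}$ into $c$ equal-sum blocks of size $2k+1$, via a structural decomposition of $G$. First I would argue that $G = Ci((2k+1)c,\{c,2c,\ldots,kc\}) \cong c\,K_{2k+1}$. Indeed, $v_i \sim v_j$ iff $(i-j) \bmod n \in \{c,2c,\ldots,kc,(k+1)c,\ldots,2kc\}$, and after factoring out $c$ this is exactly the set of nonzero residues of $\mathbb{Z}/(2k+1)\mathbb{Z}$. Hence $v_i \sim v_j$ iff $c \mid (i-j)$ and $i\neq j$, so the vertices split into $c$ residue classes modulo $c$, each inducing a $K_{2k+1}$. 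Consequently, $N[v]$ coincides with the whole component containing $v$, which has $2k+1$ vertices.

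With this decomposition in hand, a closed distance magic labeling of $G$ is exactly a partition of $\{1,\ldots,(2k+1)c\}$ into $c$ blocks of size $2k+1$, all with common sum $k'$. Summing over components gives $ck' = \tfrac{(2k+1)c\bigl((2k+1)c+1\bigr)}{2}$, so
\[
k' = \frac{(2k+1)\bigl((2k+1)c+1\bigr)}{2}.
\]
Since $2k+1$ is odd, $k'\in\mathbb{Z}$ requires $(2k+1)c+1$ to be even, which forces $c$ to be odd. This settles the necessity direction by a pure parity obstruction.

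For sufficiency I would split on $c$. If $c=1$, then $G=K_{2k+1}$ is trivially closed distance magic. If $c\geq 3$ is odd, then both $c$ and $2k+1$ are odd integers $\geq 3$, so Theorem~\ref{rectangles} supplies a magic $(c,2k+1)$-rectangle; its $c$ rows give a partition of $\{1,\ldots,(2k+1)c\}$ into $c$ blocks of size $2k+1$, all of equal sum. Labeling the $i$-th copy of $K_{2k+1}$ with the entries of row $i$ produces the desired bijection.

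The real content is the observation $G\cong cK_{2k+1}$; once that is noted, necessity is a one-line divisibility check and sufficiency is an immediate application of the magic rectangle theorem. I do not foresee a genuine obstacle, only the need to handle $c=1$ separately since Theorem~\ref{rectangles} requires both dimensions to exceed $1$.
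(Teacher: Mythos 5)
Your proof is correct and takes essentially the same route as the paper: necessity via the integrality (parity) of the magic constant $k'=(2k+1)\bigl((2k+1)c+1\bigr)/2$, and sufficiency via a magic $(c,2k+1)$-rectangle whose rows (in the paper, columns) label the $c$ cliques. The paper only records the isomorphism $G\cong cK_{2k+1}$ as a remark after the lemma, but its labeling $\ell(v_{ic+j})=a_{i+1,j+1}$ implicitly uses exactly the decomposition you make explicit.
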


\begin{proof}
In the case when $c=1$, $G$ is a complete graph and thus also closed distance magic graph.
Hence we can focus on the case when $c\geq 2$. Notice that $G$ is $2k$-regular. By
Observation~\ref{cor_regular} there is no closed distance magic $2k$-regular graph $G$
with $n$ being even. Thus $c$ has to be odd.

Conversely, if $c$ is an odd integer, then there exists a magic $(2k+1,c)$-rectangle by
Theorem~\ref{rectangles}. Let $a_{i,j}$ be an $(i,j)$-entry of the $(2k+1,c)$-rectangle,
$1\leq i\leq 2k+1$ and $1\leq j\leq c$ . Define the labeling
$\ell \colon V(G) \rightarrow \{1,\ldots ,n\}$ as $\ell(v_{ic+j})=a_{i+1,j+1}$ for
$i\in \{0,\ldots ,2k\}$ and $j\in \{0,\ldots,c-1\}$. Obviously $\ell $ is a bijection
and moreover for each $t\in\{0,1,\ldots,n-1\}$ we have $w(v_t)=k'$.
\end{proof}

As an immediate consequence of Lemmas~\ref{Syl1}--\ref{Syl3} we obtain the following theorem:

\begin{mytheorem} Let $G=Ci(n,\{c,2c,\ldots,kc\})$. The graph $G$ is  closed distance magic if and only if either $n=2kc$ or $n=(2k+1)c$ and $c$ is odd.
\end{mytheorem}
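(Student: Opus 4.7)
The plan is to assemble the theorem directly from Lemmas~\ref{Syl1}--\ref{Syl3}, since each of the two directions is already encoded in them; no new computation is needed.

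For the forward implication, I would suppose that $G = Ci(n,\{c,2c,\ldots,kc\})$ is closed distance magic. Lemma~\ref{Syl1} immediately restricts the order $n$ to exactly two possibilities, $n = 2kc$ or $n = (2k+1)c$. The first possibility imposes no further condition. In the second case, Lemma~\ref{Syl3} states that closed distance magicness is equivalent to $c$ being odd, so the hypothesis forces $c$ to be odd. This covers the ``only if'' direction.

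For the reverse implication, I would split into the same two cases. If $n = 2kc$, then Lemma~\ref{Syl2} directly provides a closed distance magic labeling (the one built from the pairing $\ell(v_i) = i+1$, $\ell(v_{kc+i}) = n-i$). If instead $n = (2k+1)c$ with $c$ odd, then the ``if'' part of Lemma~\ref{Syl3} supplies a labeling derived from a magic $(2k+1,c)$-rectangle, whose existence is guaranteed by Theorem~\ref{rectangles} since $2k+1$ and $c$ are both odd and $(2k+1,c) \neq (2,2)$. In either subcase $G$ is closed distance magic, which completes the proof.

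There is no genuine obstacle beyond verifying that the case analysis is exhaustive and that the lemmas are being applied in the correct direction; the cases $n = 2kc$ and $n = (2k+1)c$ are disjoint (they would coincide only if $c = 0$, which is excluded by $c \geq 1$), so the combined statement is a clean disjunction.
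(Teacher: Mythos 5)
Your proposal is correct and matches the paper exactly: the theorem is stated there as an immediate consequence of Lemmas~\ref{Syl1}--\ref{Syl3}, assembled in precisely the way you describe. No further comment is needed.
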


Observe that if $n\equiv 0\pmod c$, then $Ci(2kc,\{c,2c,\ldots,kc\})\cong cK_{2k}$ and $Ci((2k+1)c,\{c,2c,\ldots,kc\})\cong cK_{2k+1}$. Thus we obtain the following corollary.

\begin{mycorollary}
Given $n\geq 2$ and $c\geq 1$, the union $cK_n$ is closed distance magic if and only if $n(c+1)\equiv 0\pmod 2$.
\end{mycorollary}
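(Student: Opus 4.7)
The plan is to reduce directly to the preceding theorem via the two isomorphisms stated in the observation immediately above the corollary, splitting into cases according to the parity of $n$.

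First I would rewrite the condition $n(c+1)\equiv 0\pmod 2$ in the more telling form: it fails precisely when both $n$ and $c+1$ are odd, i.e., when $n$ is odd and $c$ is even. So the corollary asserts exactly that $cK_n$ is closed distance magic except in the case where $n$ is odd and $c$ is even.

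Next I would handle the two parity cases for $n$ separately. If $n$ is even, write $n=2k$ with $k\geq 1$; then by the observation $cK_n\cong Ci(2kc,\{c,2c,\ldots,kc\})$, which by the preceding theorem (the $n'=2kc$ branch) is closed distance magic for every $c\geq 1$. This matches the ``if'' direction, since $n(c+1)=2k(c+1)$ is automatically even. If $n$ is odd, write $n=2k+1$ with $k\geq 1$ (or $k=0$, but then $n=1$ is excluded by hypothesis); then $cK_n\cong Ci((2k+1)c,\{c,2c,\ldots,kc\})$, which by the theorem is closed distance magic iff $c$ is odd, i.e., iff $c+1$ is even, i.e., iff $n(c+1)=(2k+1)(c+1)$ is even.

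There is essentially no obstacle here beyond bookkeeping: the heavy lifting was done in Lemmas \ref{Syl1}--\ref{Syl3}, and the two isomorphisms $cK_{2k}\cong Ci(2kc,\{c,2c,\ldots,kc\})$ and $cK_{2k+1}\cong Ci((2k+1)c,\{c,2c,\ldots,kc\})$ are immediate from the definition of the circulant graph (vertices $v_i$ and $v_j$ are adjacent iff their indices agree modulo $c$ but differ, so the equivalence classes modulo $c$ give the $c$ disjoint cliques). The only mild care needed is to confirm that the equivalence $n(c+1)\equiv 0\pmod 2 \iff (n\text{ even}) \lor (c\text{ odd})$ lines up exactly with the two branches of the theorem, which it does.
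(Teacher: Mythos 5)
Your proof is correct and follows exactly the route the paper intends: the paper gives no explicit proof, deriving the corollary immediately from the stated isomorphisms $cK_{2k}\cong Ci(2kc,\{c,\ldots,kc\})$ and $cK_{2k+1}\cong Ci((2k+1)c,\{c,\ldots,kc\})$ together with the preceding theorem, which is precisely your case split on the parity of $n$. The parity bookkeeping ($n(c+1)$ odd iff $n$ odd and $c$ even) is also handled correctly.
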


We finish this section with necessary conditions for $Ci(n,\{1,2,\dots,k-1,k+1\})$ to be closed distance magic. We know that the set of distinct eigenvalues of $G=Ci(n,S)$ is $\{\lambda_j=\sum_{s=2}^{|V(G)|}{a_s\omega^{(s-1)j}}\}$, $j\in\{0,\dots,n-1\}$, where $\omega=\exp(2\pi i/n)$ and $a_s$ is the $s$-th entry of the first row of the adjacency matrix of $G$ (see e.g. \cite{Big}, p. 16). This can be rewritten as $\{\lambda_j=2\sum_{s\in S}{\cos(2j\pi s/n)}\}$. Before we proceed, let us prove the following facts.\\

\begin{mylemma}\label{lem_solutionCos1}
The equation
$$
\sum_{s=1}^{k}{\cos(sx)}=-1/2
$$
has $2k$ solutions in the interval $[-\pi,\pi]$, namely $\pm 2j\pi/(2k+1), j\in\{1,\ldots,k\}.$
\end{mylemma}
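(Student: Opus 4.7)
The plan is to reduce the trigonometric equation to a Dirichlet kernel identity, and then to read off the zeros. Specifically, I will use the standard identity
\begin{equation*}
1+2\sum_{s=1}^{k}\cos(sx)=\frac{\sin\bigl((k+\tfrac{1}{2})x\bigr)}{\sin(x/2)},
\end{equation*}
which holds whenever $\sin(x/2)\neq 0$. Multiplying the equation $\sum_{s=1}^{k}\cos(sx)=-1/2$ by $2$ and adding $1$ shows that the equation is equivalent to
\begin{equation*}
\frac{\sin\bigl((k+\tfrac{1}{2})x\bigr)}{\sin(x/2)}=0,
\end{equation*}
which, for $x\in[-\pi,\pi]$ with $x\neq 0$ (the only point in this interval where the denominator vanishes), reduces to $\sin((k+\tfrac{1}{2})x)=0$.

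Next I would solve $\sin((k+\tfrac{1}{2})x)=0$. The zeros are exactly $x=\frac{2j\pi}{2k+1}$ for integer $j$, and those lying in $[-\pi,\pi]$ correspond to $j\in\{-k,-k+1,\dots,k\}$, giving $2k+1$ candidates. The candidate $j=0$ gives $x=0$, which must be excluded because the identity above is invalid there; and one can verify directly that $x=0$ is not a solution of the original equation since $\sum_{s=1}^{k}\cos(0)=k\neq -1/2$. The remaining $2k$ values are precisely $\pm 2j\pi/(2k+1)$ for $j\in\{1,\ldots,k\}$, which is the claimed list.

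Finally I would argue that these $2k$ solutions are all: since $\sum_{s=1}^{k}\cos(sx)+\tfrac{1}{2}$ equals $\tfrac{1}{2}\sin((k+\tfrac{1}{2})x)/\sin(x/2)$ wherever the right-hand side is defined, and extends continuously to $x=0$ with value $k+\tfrac{1}{2}\neq 0$, the zero set in $[-\pi,\pi]$ of the left-hand side coincides with the zero set of $\sin((k+\tfrac{1}{2})x)$ minus the point $x=0$.

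The only subtlety is the edge case $x=0$, which must be handled separately because the Dirichlet kernel identity has a removable singularity there; once that point is excluded, the argument is routine. No other obstacles are anticipated.
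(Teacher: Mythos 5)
Your proof is correct and takes essentially the same route as the paper's: both reduce the equation to the Dirichlet kernel identity $\sum_{s=1}^{k}\cos(sx)=\frac{\sin((2k+1)x/2)}{2\sin(x/2)}-\frac{1}{2}$ and read the solutions off the zeros of $\sin\bigl((k+\tfrac{1}{2})x\bigr)$. The only (immaterial) difference is in how completeness is settled: the paper first notes the equation can have at most $2k$ solutions in $[-\pi,\pi]$ and then verifies the $2k$ listed values, whereas you obtain the full solution set directly from the equivalence away from the removable singularity at $x=0$, handling that point separately.
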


\begin{proof} Observe that the above equation has at most $2k$ solutions in the interval
$[-\pi,\pi]$, as the highest multiple of $x$ is $k$ and all the multiples are integers.
On the other hand we have
$$
\begin{array}{l}
\sum_{s=1}^{k}{\cos(sx)}=\sum_{s=1}^{k}\frac{(e^{isx}+e^{-isx})}{2}=\frac{1}{2}\left(e^{ix}\frac{e^{ikx}-1}{e^{ix}-1}+e^{-ix}\frac{e^{-ikx}-1}{e^{-ix}-1}\right)=\\
=\frac{e^{ikx/2}-e^{-ikx/2}}{2i}\frac{e^{i(k+1)x/2}+e^{-i(k+1)x/2}}{2}\frac{2i}{e^{ix/2}-e^{-ix/2}}=\frac{\sin(kx/2)\cos((k+1)x/2)}{\sin(x/2)}=\\
=\frac{\sin((2k+1)x/2)}{2\sin(x/2)}-1/2.
\end{array}
$$
Now substituting $x$ with any of the numbers $2j\pi/(2k+1), j\in\{\pm 1,\ldots,\pm k\}$ we obtain
$$
\sin((2k+1)x/2)=\sin(j\pi)=0,
$$
and thus $\sum_{s=1}^{k}{\cos(sx)}=-1/2$ for every $\pm 2j\pi/(2k+1), j\in\{1,\ldots,k\}$.
\end{proof}

\begin{mylemma}\label{lem_solutionCos2}
The equation
$$
\sum_{s=1}^{k-1}{cos(sx)}+\cos((k+1)x)=-1/2
$$
has $2k+2$ solutions in the interval $[-\pi,\pi]$, namely $\pm 2j\pi/(2k+1), j\in\{1,\dots,k\}$, and $\pm \pi/3$.
\end{mylemma}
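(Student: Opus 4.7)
The plan is to reduce the equation to the one already solved in Lemma~\ref{lem_solutionCos1} by writing
$$\sum_{s=1}^{k-1}\cos(sx)+\cos((k+1)x)=\sum_{s=1}^{k}\cos(sx)+\bigl(\cos((k+1)x)-\cos(kx)\bigr),$$
so that the closed-form expression for the first sum derived in the previous proof can be reused. Setting the right-hand side equal to $-1/2$, substituting
$$\sum_{s=1}^{k}\cos(sx)=\frac{\sin((2k+1)x/2)}{2\sin(x/2)}-\frac{1}{2}$$
from the proof of Lemma~\ref{lem_solutionCos1}, and applying the sum-to-product identity
$$\cos((k+1)x)-\cos(kx)=-2\sin\!\left(\frac{(2k+1)x}{2}\right)\sin\!\left(\frac{x}{2}\right),$$
the equation should collapse, after the two $-1/2$ terms cancel, to the factored form
$$\sin\!\left(\frac{(2k+1)x}{2}\right)\left[\frac{1}{2\sin(x/2)}-2\sin(x/2)\right]=0,$$
valid for $x\in[-\pi,\pi]\setminus\{0\}$ (note that $x=0$ gives LHS $=k\neq -1/2$, so it is automatically excluded, and the division by $\sin(x/2)$ is then justified).

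Next I would analyze the two factors separately. The first factor vanishes exactly when $(2k+1)x/2$ is a nonzero integer multiple of $\pi$; in the interval $(-\pi,\pi)$ this gives the $2k$ values $x=\pm 2j\pi/(2k+1)$ for $j\in\{1,\dots,k\}$. The second factor vanishes when $4\sin^2(x/2)=1$, i.e.\ $\sin(x/2)=\pm 1/2$; since $x/2\in[-\pi/2,\pi/2]$, the only admissible solutions are $x=\pm\pi/3$, contributing two more values. Taken together, this produces the $2k+2$ claimed solutions.

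To finish, I would argue these are \emph{all} the solutions by a degree count. Using Chebyshev polynomials, $\cos(sx)=T_s(\cos x)$, so the LHS equals $P(\cos x)$ for a polynomial $P$ of degree $k+1$ (the top degree coming from $\cos((k+1)x)$). Hence $P(\cos x)=-1/2$ admits at most $k+1$ distinct values of $\cos x$, and each $c\in(-1,1)$ lifts to two $x\in(-\pi,\pi)$, giving at most $2(k+1)=2k+2$ solutions in $[-\pi,\pi]$. Since we have already exhibited that many distinct solutions (none of the $\pm 2j\pi/(2k+1)$ coincide with $\pm\pi/3$, because $(2k+1)/3$ is not an integer for $k\in\{1,2\}\cup\ldots$ giving a $j$ in range, which is verified by checking $\cos x$ values), the list is complete.

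The only step where I expect any real subtlety is the factorization: one must keep track of the excluded point $x=0$ when dividing by $\sin(x/2)$, verify that the $2k+2$ candidate solutions are genuinely distinct, and confirm the degree of $P$ is exactly $k+1$ (it is, since $T_{k+1}$ contributes a leading $2^{k}c^{k+1}$ term that cannot be cancelled by the lower-degree Chebyshev terms from $T_1,\dots,T_{k-1}$). Everything else is routine algebraic manipulation building on the identities already established in Lemma~\ref{lem_solutionCos1}.
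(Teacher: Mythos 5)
Your proposal is correct, and it uses the same two ingredients as the paper (the closed form $\sum_{s=1}^{k}\cos(sx)=\frac{\sin((2k+1)x/2)}{2\sin(x/2)}-\frac12$ from the proof of Lemma~\ref{lem_solutionCos1} and the sum-to-product identity for $\cos((k+1)x)-\cos(kx)$), but it organizes them differently. The paper verifies the two candidate families pointwise: it plugs $x=\pm 2j\pi/(2k+1)$ into $\cos((k+1)x)-\cos(kx)$ and shows this difference vanishes there (so these points inherit the value $-1/2$ from Lemma~\ref{lem_solutionCos1}), then checks $x=\pm\pi/3$ by a separate direct computation, and finally invokes the ``at most $2k+2$ solutions'' degree bound for completeness. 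You instead apply the identities globally to obtain the factorization
$$
\sin\!\left(\tfrac{(2k+1)x}{2}\right)\left[\tfrac{1}{2\sin(x/2)}-2\sin\!\left(\tfrac{x}{2}\right)\right]=0
$$
on $[-\pi,\pi]\setminus\{0\}$, which characterizes the full solution set in one stroke; the degree-count via Chebyshev polynomials then becomes a redundant (but harmless) safety net rather than a load-bearing step. This buys a slightly cleaner argument: exhibiting the solutions and proving there are no others happen simultaneously. Two minor points to tidy up: your justification that $\pm\pi/3$ is distinct from the family $\pm 2j\pi/(2k+1)$ is garbled as written --- the clean reason is that $2j\pi/(2k+1)=\pi/3$ would force $6j=2k+1$, impossible by parity; and when counting lifts of roots of $P(\cos x)=-1/2$ you should note that $c=-1$ lifts to the two endpoints $\pm\pi$ of the closed interval, which still keeps the bound at $2(k+1)$.
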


\begin{proof} Observe that the above equation has at most $2k+2$ solutions in the interval $[-\pi,\pi]$, as the highest multiple of $x$ is $k+1$ and all the multiples are integers. On the other hand we have
$$
\cos(\frac{2s(k+1)\pi}{2k+1})-\cos(\frac{2sk\pi}{2k+1})=-2\sin(s\pi) \sin\frac{s\pi}{2k+1}=0,
$$
and thus $\sum_{s=1}^{k-1}{cos(sx)}+\cos((k+1)x)=-1/2$ for every $\pm 2j\pi/(2k+1), j\in\{1,\ldots,k\}$ by Lemma \ref{lem_solutionCos1}. We have also
$$
\begin{array}{l}
\sum_{s=1}^{k-1}{cos(s\pi/3)}+\cos((k+1)\pi/3)=\\
=\frac{\sin((2k+1)\pi/6)}{2\sin(\pi/6)}-1/2-\cos(k\pi/3)+\cos((k+1)\pi/3)=-1/2,
\end{array}
$$
which finishes the proof.
\end{proof}

%\begin{prop}\label{prop_Ci1}
%For given $n$ and $k$, $1<k\leq \lfloor(n-1)/2\rfloor$, let the multiplicity of $-1$ in $Sp(Ci(n,\{1,\dots,k\}))$ be $m$. Then
%$$m=2|\{t|1\leq t\leq k, nt\equiv 0\, ({\rm mod}\, {2k+1})\}|.$$
%In particular, if the graph $Ci(n,\{1,\dots,k\})$ is closed distance magic, then $nt\equiv 0\, ({\rm mod}\, {2k+1})$ for some $t\in\{1,\ldots,k\}$.
%\end{prop}

%\begin{proof} As $G$ is regular, by Corollary \ref{neces_Sp-1} we have that $\sum_{s=1}^{k}{\cos(2j\pi s/n)}=-1/2$. Putting $x=2j\pi/n$ in Lemma \ref{lem_solutionCos1} we obtain that $2j\pi/n=\pm 2t\pi/(2k+1)$ and so $j=\pm tn/(2k+1)$ for some $t\in\{1,\ldots,k\}$.\end{proof}\\

The following proposition gives the necessary conditions for $Ci(n,\{1,\dots,k-1,k+1\})$ to be closed distance magic.

\begin{myproposition}\label{prop_Ci2}
For given $n$ and $k$, $1<k\leq \lfloor(n-3)/2\rfloor$, let the multiplicity of $-1$ in $Sp(Ci(n,\{1,\dots,k-1,k+1\}))$ be $m$. Then $m=m_1+m_2$, where
$$m_1=2|\{t|1\leq t\leq k, nt\equiv 0\, ({\rm mod}\, {2k+1})\}|$$
and $m_2=2$ if $n\equiv 0\, ({\rm mod}\, 6)$ and $m_2=0$ otherwise. In particular, if the graph $Ci(n,\{1,\dots,k-1,k+1\})$ is closed distance magic, then $nt\equiv 0\, ({\rm mod}\, {2k+1})$ for some $t\in\{1,\ldots,k\}$ or $n\equiv 0\, ({\rm mod}\, 6)$.
\end{myproposition}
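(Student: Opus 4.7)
The plan is to apply directly the spectral formula for circulant graphs quoted just before the statement, combined with Lemma \ref{lem_solutionCos2}. Setting $S=\{1,\dots,k-1,k+1\}$, the formula gives
$$\lambda_j=2\sum_{s=1}^{k-1}\cos(2\pi j s/n)+2\cos(2\pi j(k+1)/n),$$
so, writing $x_j=2\pi j/n$, the condition $\lambda_j=-1$ is exactly the equation of Lemma \ref{lem_solutionCos2} at $x=x_j$. Since cosine is $2\pi$-periodic and even, this happens iff the reduction of $x_j$ modulo $2\pi$ into $[-\pi,\pi]$ is one of the $2k+2$ values $\pm 2t\pi/(2k+1)$ with $t\in\{1,\dots,k\}$, or $\pm\pi/3$.

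Next I would count, for each such target angle $x^{\ast}$, the number of $j\in\{0,\dots,n-1\}$ with $x_j\equiv x^{\ast}\pmod{2\pi}$. For $x^{\ast}=2t\pi/(2k+1)$ this requires $j=nt/(2k+1)$, which yields a (unique) integer in $\{1,\dots,n-1\}$ exactly when $(2k+1)\mid nt$. The opposite angle $-2t\pi/(2k+1)$ gives $j=n(2k+1-t)/(2k+1)$ under the same divisibility, and the two resulting indices are distinct because their sum is $n$ while $2t\neq 2k+1$. Hence each admissible $t\in\{1,\dots,k\}$ contributes exactly two indices $j$ with $\lambda_j=-1$, yielding
$$m_1=2\bigl|\{t:1\le t\le k,\ nt\equiv 0\pmod{2k+1}\}\bigr|.$$

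The same bookkeeping applied to $\pm\pi/3$ shows that the indices $n/6$ and $5n/6$ lie in $\{1,\dots,n-1\}$ precisely when $6\mid n$, so $m_2=2$ if $n\equiv 0\pmod 6$ and $m_2=0$ otherwise. Because $2k+1$ is odd, the equality $2t\pi/(2k+1)=\pm\pi/3$ is impossible, so the two families of solution angles produce disjoint sets of indices; no overcounting occurs and $m=m_1+m_2$.

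For the ``in particular'' statement, the graph $Ci(n,\{1,\dots,k-1,k+1\})$ is $2k$-regular (the hypothesis $k\le\lfloor(n-3)/2\rfloor$ ensures that no element of $S$ equals $n/2$). If it is closed distance magic, Corollary \ref{neces_Sp-1} forces $-1\in Sp(G)$, so $m\ge 1$, whence $m_1\ge 1$ or $m_2=2$, giving the claimed dichotomy. The only delicate point in the argument is the index counting in the second and third paragraphs; once one exploits that $2k+1$ is odd, the different solution angles of Lemma \ref{lem_solutionCos2} correspond to pairwise distinct values of $j$ and the tally is clean.
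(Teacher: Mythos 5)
Your proof is correct and follows essentially the same route as the paper: apply the circulant eigenvalue formula and reduce the condition $\lambda_j=-1$ to Lemma \ref{lem_solutionCos2} via the substitution $x=2\pi j/n$. In fact your index counting (pairing $j$ with $n-j$ and using the oddness of $2k+1$ to rule out collisions between and within the two families of solution angles) is more explicit than the paper's own argument, which only spells out the ``in particular'' clause and leaves the multiplicity tally $m=m_1+m_2$ implicit.
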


\begin{proof} Let $G\cong Sp(Ci(n,\{1,\dots,k-1,k+1\}))$ be closed distance magic graph. As $G$ is regular, by Corollary \ref{neces_Sp-1}
we have that $-1\in Sp(G)$ and hence $\sum_{s=1}^{k-1}{\cos(2j\pi s/n)}+\cos(2j\pi (k+1)/n)=-1/2$. Putting $x=2j\pi/n$ in Lemma \ref{lem_solutionCos2}
we obtain that $2j\pi/n=\pm 2t\pi/(2k+1)$ and so $j=\pm tn/(2k+1)$ for some $t\in\{1,\ldots,k\}$, or $2j\pi/n=\pm \pi/3$ and so $j=\pm n/6$. \end{proof}

%\begin{prop}
%Let $k\geq 2$ be such an integer that $t$ and $2k+1$ are coprimes for every $t\in\{1,\ldots, k\}$. If for some $n\geq 2k+1$ the graph $G=Ci(n,\{1,\ldots,k\})$ is closed distance magic, then $G\cong K_n$.
%\end{prop}

%\begin{proof} As $t$ and $2k+1$ are coprimes for every $t=1,\dots, k$, by the Lemma \ref{lem_solutionCos1} we obtain that $n\equiv 0\, ({\rm mod}\, 2k+1)$ and by the Proposition \ref{prop_Ci1} the multiplicity of $-1$ in $Sp(G)$ is $2k$. Let us consider the following $2k+1$ labelings of $V(G)$:
%$$
%\ell_i(x_j)=\left\{
%\begin{array}{lll}
%k^\prime, &\text{if} & j\equiv i\, ({\rm mod}\, 2k+1),\\
%0, &\text{if} & j\not\equiv i\, ({\rm mod}\, 2k+1),
%\end{array}
%\right.
%i=0,1,\dots,2k+1.
%$$
%Obviously these labelings satisfy the system (\ref{system1}) and are linearly independent. Moreover for any linear combination $\ell$ of $\ell_0,\ldots,\ell_{2k+1}$, we have $\ell(x_{j_1})=\ell(x_{j_2})$ for $j_1\equiv j_2\, ({\rm mod}\, 2k+1)$, so there is no bijection $\ell \colon V(G)\rightarrow \{1,\ldots ,n\}$ linearly independent from $\ell_0,\ldots,\ell_{2k+1}$ if $n>2k+1$. As the multiplicity of $-1$ in $Sp(C_n)$ is $2k$, $G$ is not closed distance magic if $n>2k+1$.
%\end{proof}

%%%%%%%%%%%%%%%%%%%%%%%%%%%%%%%%%%%%%%%%%%%%%%%%%%%%%%%
%%%%%%%%%%%%%%%%%%%%%%%%%%%%%%%%%%%%%%%%%%%%%%%%%%%%%%%
%%%%%%%%%%%%%%%%%%%%%%%%%%%%%%%%%%%%%%%%%%%%%%%%%%%%%%%

\section{A solution to combinatorial problem forcing more closed distance magic graphs}

Let us consider graphs created in the following way: take the cycle $C_{k}$, exchange every vertex $v_{i}$ to a complete graph of some order (we will denote such a clique by $K[v_{i}]$) and join all the vertices of this complete graph to all the vertices of $K[v_{j}]$ for every $v_{j}$ being a neighbor of $v_{i}$ in $C_k$. In other words: every edge $v_{i}v_{j}$ of the original graph (cycle) becomes a complete graph $K_{p_{i}+p_{j}}$, where $p_{i}$ and $p_{j}$ are the orders of $K[v_{i}]$ and $K[v_{j}]$, respectively. Let $n=|V(G)|=p_{1}+\dots +p_{k}$.

In the special case where all the complete graphs are of the same order $p$, we obtain $G\cong C_k\boxtimes K_p$ at the end. Hence on one hand we try to generalize Theorems \ref{Beena} and \ref{magic_product_G_K} by more general graphs than strong products, while on the other side we specialize these two theorems as the components under discussion are cycles instead of general regular graphs.

The following observation gives necessary conditions for a graph defined as above to be closed distance magic (in some cases).

\begin{myobservation}
Let $G$ be a closed distance magic graph constructed as above. If $k\not\equiv 0\imod 3$, then the sum of labels in every clique $K[v_i]$ equals to ${\binom{{n+1}}{{2}}}/k$.
\end{myobservation}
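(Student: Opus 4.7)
The plan is to exploit the block structure of $G$ by summing labels over each clique $K[v_i]$ and looking at the constraints this forces on the magic constant.

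First I would set $L_i = \sum_{x \in V(K[v_i])} \ell(x)$ for each $i \in \{1,\dots,k\}$. The crucial observation is that by the construction of $G$, for any vertex $x \in V(K[v_i])$ the closed neighborhood $N[x]$ is exactly $V(K[v_{i-1}]) \cup V(K[v_i]) \cup V(K[v_{i+1}])$, where indices are taken modulo $k$. Consequently, the closed distance magic condition reads
\begin{equation*}
L_{i-1} + L_i + L_{i+1} = k' \qquad \text{for every } i \in \{1,\dots,k\}.
\end{equation*}

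Next, subtracting the equation for index $i$ from the equation for index $i+1$ yields $L_{i+2} = L_{i-1}$, i.e., $L_j = L_{j+3}$ for every $j$ with indices mod $k$. This means that $L_i$ depends only on the residue class of $i$ in the cyclic group $\mathbb{Z}_k$ acting by addition of $3$. Under the assumption $k \not\equiv 0 \pmod 3$ we have $\gcd(k,3)=1$, so $3$ generates $\mathbb{Z}_k$, and therefore $L_1 = L_2 = \dots = L_k$; call this common value $L$.

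Finally, since $\ell$ is a bijection from $V(G)$ onto $\{1,\dots,n\}$, the total sum of labels is $\sum_{i=1}^{k} L_i = 1+2+\dots+n = \binom{n+1}{2}$. Combined with $L_1 = \dots = L_k = L$, this gives $kL = \binom{n+1}{2}$, hence $L = \binom{n+1}{2}/k$, as claimed. There is no real obstacle here: the whole argument is driven by the fact that $N[x]$ only sees three consecutive cliques, and the hypothesis on $k$ is used precisely to collapse the orbit structure of multiplication-by-$3$ on $\mathbb{Z}_k$ to a single class.
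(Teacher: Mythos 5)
Your proposal is correct and follows essentially the same route as the paper: comparing the weights of vertices in adjacent cliques to get $L_{j}=L_{j+3}$, invoking $k\not\equiv 0\pmod 3$ to force all clique sums equal, and dividing the total $\binom{n+1}{2}$ by $k$. Your write-up just makes the $\gcd(k,3)=1$ step and the final normalization more explicit than the paper does.
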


\begin{proof} For any $v_{i}\in V(C)$, let us choose any vertex $v\in K[v_{i}]$. Clearly we have
$w(v)=\sum_{u\in K[v_{i-1}]\cup K[v_{i}]\cup K[v_{i+1}]}{\ell (u)}$. For any $x\in K[v_{i+1}]$
we have in turn $w(x)=\sum_{u\in K[v_{i}]\cup K[v_{i+1}]\cup K[v_{i+2}]}{\ell (u)}$. As $w(v)=w(x)$,
we have $\sum_{u\in K[v_{i-1}]}{\ell (u)}=\sum_{u\in K[v_{i+2}]}{\ell (u)}$ for any $i$ and finally
$\sum_{u\in K[v_{i}]}{\ell (u)}=\sum_{u\in K[v_{j}]}{\ell (u)}$ for any $i$ and $j$ since
$k\not\equiv 0\, ({\rm mod}\, 3)$. Hence the result follows. \end{proof}

If $k\equiv 0 \imod 3$, then the necessary conditions have more complicated form. As in this case $\sum_{u\in K[v_{i-1}]}{\ell (u)}=\sum_{u\in K[v_{i+2}]}{\ell (u)}$ does not imply that $\sum_{u\in K[v_{i}]}{\ell (u)}=\sum_{u\in K[v_{j}]}{\ell (u)}$ for arbitrary $i$ and $j$, we can only sate the following.

\begin{myobservation}
Let $G$ be a closed distance magic graph constructed as above. If $k\equiv 0\imod 3$, then the there exist three numbers $S_0$, $S_1$, $S_2$ such that the sum of labels in every clique $K[v_i]$ equals to $S_{i\imod 3}$ and $S_0+S_1+S_2=3{\binom{{n+1}}{{2}}}/k$.
\end{myobservation}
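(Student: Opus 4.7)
The plan is to track clique-sums instead of individual labels. For each $i\in\{0,1,\dots,k-1\}$, let
\[
\sigma_i=\sum_{u\in K[v_i]}\ell(u).
\]
Since every vertex $v\in K[v_i]$ has closed neighborhood $K[v_{i-1}]\cup K[v_i]\cup K[v_{i+1}]$ (indices mod $k$), the magic condition gives
\[
\sigma_{i-1}+\sigma_i+\sigma_{i+1}=k'
\]
for every $i$. This is already the key relation; the rest is a two-line linear algebra argument on $\mathbb{Z}/k\mathbb{Z}$.

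First I would subtract two consecutive equations. From $\sigma_{i-1}+\sigma_i+\sigma_{i+1}=\sigma_i+\sigma_{i+1}+\sigma_{i+2}$ we obtain $\sigma_{i-1}=\sigma_{i+2}$, or equivalently $\sigma_i=\sigma_{i+3}$ for every $i$. When $k\equiv 0\imod 3$, the orbits of the shift-by-$3$ map on $\mathbb{Z}/k\mathbb{Z}$ are exactly the three residue classes modulo $3$, so $\sigma_i$ depends only on $i\imod 3$. Define
\[
S_0=\sigma_0,\quad S_1=\sigma_1,\quad S_2=\sigma_2,
\]
so that $\sigma_i=S_{i\imod 3}$ for every $i$, which is the first part of the claim.

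For the sum formula I would simply add over all cliques. Because $\ell$ is a bijection from $V(G)$ to $\{1,\dots,n\}$,
\[
\sum_{i=0}^{k-1}\sigma_i=\sum_{u\in V(G)}\ell(u)=\binom{n+1}{2}.
\]
On the other hand, since $k$ is divisible by $3$, exactly $k/3$ of the indices $i\in\{0,\dots,k-1\}$ lie in each residue class modulo $3$, so
\[
\sum_{i=0}^{k-1}\sigma_i=\frac{k}{3}(S_0+S_1+S_2).
\]
Equating the two expressions yields $S_0+S_1+S_2=3\binom{n+1}{2}/k$, as required.

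There is no real obstacle here: the whole proof is driven by the three-term recurrence $\sigma_{i-1}+\sigma_i+\sigma_{i+1}=k'$ and the divisibility $3\mid k$. The only thing worth being careful about is that, unlike the preceding observation for $k\not\equiv 0\imod 3$, one cannot collapse the three values $S_0,S_1,S_2$ into one: when $3\mid k$ the shift-by-$3$ orbits do not merge, which is precisely why three (possibly distinct) sums can occur.
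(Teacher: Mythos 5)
Your proof is correct and follows exactly the line of reasoning the paper uses: the paper leaves this observation without a formal proof, but its preceding argument for the $k\not\equiv 0\imod 3$ case already establishes the key relation $\sigma_{i-1}=\sigma_{i+2}$ from comparing weights of vertices in adjacent cliques, and your completion via the shift-by-$3$ orbits and the total sum $\binom{n+1}{2}$ is precisely what the authors intend. Nothing is missing.
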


We focus only on the first case. The idea is then to find such a labeling of vertices, that the sum of the labels is the same in every clique. This is not always the necessary condition, but if we are able to find such a labeling, then we can also label any graph in which the role of $C_k$ is played by any regular graph. So, the problem can be reformulated as follows.

\begin{myproblem}
\label{problem}Let $n$ and $p_{1},\dots ,p_{k}$ be positive integers such that $p_{1}+\dots +p_{k}=n$. When is it possible to find a partition of the set $\{1,\dots ,n\}$ into $k$ sets $A_{1},\dots ,A_{k}$ such that $|A_{i}|=p_{i}$ and $\sum_{x\in A_{i}}{x}={\binom{{n+1}}{{2}}}/k$ for every $i\in\{1,\ldots,k\}$?
\end{myproblem}

As far as we know, this problem has not been studied too much so far.   It provides a combinatorial problem where the solution yields a closed distance magic labeling of a family of graphs as described above.  To illustrate it let $p_{1}=3$, $p_{2}=p_{3}=4$ and $p_{4}=5$ and consequently $n=16$. Sets $A_{1}=\{3,15,16\}$, $A_{2}=\{1,6,13,14\}$, $A_{3}=\{2,9,11,12\}$, $A_{4}=\{4,5,7,8,10\}$ provide the desired partition in this case. Observe that $C_4$ can be replaced by any other regular graph on 4 vertices, namely $K_4$ or $2K_2$.   On the other hand, if $p_{1}=2$, $p_{2}=p_{3}=4$ and $p_{4}=6$, then $\sum_{x\in A_{i}}{x}$ should be also equal to $34$, but $\sum_{x\in A_{1}}{x}\leq 31$ for each partition. Also if $k$ is even and $p$ odd where $p=p_{1}=\cdots =p_{k}$, we now that there is no solution. Namely the construction yields $(3p-1)$-regular graph $C_{k}\boxtimes K_{p}$, which is not closed distance magic by Observation \ref{cor_regular} and hence there cannot exist a partition from Problem \ref{problem}.

We conclude with two simple necessary conditions for Problem \ref{problem}.

\begin{myobservation}
If the partition from Problem \ref{problem} exists, then $n\equiv x \imod{2k}$ where $x\in \{0,-1\}$.
\end{myobservation}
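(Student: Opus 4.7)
Suppose a valid partition $A_1,\dots,A_k$ of $\{1,\dots,n\}$ exists, with each $A_i$ having sum $\binom{n+1}{2}/k = n(n+1)/(2k)$. Since every block is a set of positive integers, this common block sum must itself be a positive integer, hence $2k \mid n(n+1)$. This is the only consequence that comes purely from the sum equation, and it is the starting point I would build on.

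Next I would use $\gcd(n,n+1)=1$: every prime power $p^a$ dividing $2k$ must divide exactly one of $n$ or $n+1$. The goal is then to upgrade the factorwise divisibility to a unified one, i.e.\ to argue that either $2k \mid n$ (giving $n \equiv 0 \pmod{2k}$) or $2k \mid n+1$ (giving $n \equiv -1 \pmod{2k}$), ruling out any mixed distribution of the prime power factors of $2k$ between $n$ and $n+1$. In the case when $2k$ is itself a prime power this step is automatic.

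The main obstacle is precisely this last step when $2k$ has two or more distinct prime divisors: the bare divisibility $2k \mid n(n+1)$ does not by itself preclude a nontrivial split (e.g.\ $2\mid n$ and $3\mid n+1$ when $2k=6$). To close this gap one would need to bring in further information from the partition beyond the sum condition, for instance the bounds $p_i(p_i+1)/2 \le S \le p_i n - p_i(p_i-1)/2$ forced by the prescribed block sizes, or a parity/counting argument that compares where the extremal elements $1$ and $n$ land across the $k$ blocks. That is the step I expect to require the most care, and the rest of the proof is a routine translation of the partition data into arithmetic.
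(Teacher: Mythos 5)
Your suspicion about the final step is exactly right, and you should push it to its conclusion: that step cannot be carried out, because the observation as stated is false whenever $2k$ has more than one prime divisor. Concretely, take $k=3$, $n=8$, $p_1=p_2=2$, $p_3=4$: the blocks $\{4,8\}$, $\{5,7\}$, $\{1,2,3,6\}$ partition $\{1,\dots,8\}$ with every block sum equal to $\binom{9}{2}/3=12$, yet $8\equiv 2\pmod 6$. If you prefer a case with $k\not\equiv 0\pmod 3$ (so that it is also relevant to the graph construction motivating the problem), take $k=5$, $n=14$: the blocks $\{7,14\}$, $\{8,13\}$, $\{9,12\}$, $\{10,11\}$, $\{1,2,3,4,5,6\}$ all sum to $\binom{15}{2}/5=21$, yet $14\equiv 4\pmod{10}$. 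So no additional information extracted from the partition (size bounds, placement of the extremal labels, parity counts) can restore the claimed congruence; the correct necessary condition is only the one you derived, namely $2k\mid n(n+1)$, which coincides with the stated congruence precisely when $2k$ is a prime power.

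For what it is worth, the paper's entire proof of this observation is the single sentence that $\binom{n+1}{2}/k$ must be an integer; it silently makes exactly the leap you refused to make, from $2k\mid n(n+1)$ to $n\equiv 0$ or $-1\pmod{2k}$. Your write-up, although it does not (and cannot) finish the proof, is the more accurate analysis: the observation should be weakened to the assertion that $k$ divides $\binom{n+1}{2}$, or equivalently that every prime power dividing $2k$ divides one of $n$ and $n+1$.
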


\begin{proof} It follows from the fact that ${\binom{{n+1}}{{2}}}/k$ must be an integer.
\end{proof}

The next condition is as follows. There always must be enough large labels so that their sum can be at least as big as the desired sum of the labels in cliques.

\begin{myobservation}
Assume that $p_1,\dots,p_k$ are given in the non-decreasing order. Let $P_j=\sum_{i=1}^{j}{p_i}$. If the partition from Problem \ref{problem} exists, then for any $1\leq j\leq k$,
$$
\sum_{i=n-P_j+1}^{n}{i}\geq j{\binom{{n+1}}{{2}}}/k.
$$
\end{myobservation}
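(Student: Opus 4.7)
The statement is an elementary extremal inequality, so the proof plan is short. First I would fix a valid partition $A_1,\dots,A_k$ witnessing Problem~\ref{problem} and index the parts so that $|A_i|=p_i$ for all $i$; in particular the sizes are arranged in non-decreasing order, matching the ordering of the $p_i$.

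Next, for a fixed $j\in\{1,\dots,k\}$, I would consider the union $B_j = A_1\cup A_2\cup\cdots\cup A_j$, which, being a disjoint union, has cardinality exactly $P_j=p_1+\cdots+p_j$. By the defining property of the partition, each $A_i$ has label-sum $\binom{n+1}{2}/k$, so
$$
\sum_{x\in B_j}{x} \;=\; \sum_{i=1}^{j}\sum_{x\in A_i}{x} \;=\; j\binom{n+1}{2}/k.
$$

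On the other hand, $B_j$ is a $P_j$-element subset of $\{1,\dots,n\}$, and for any subset of that size the sum is bounded above by the sum of the $P_j$ largest integers in $\{1,\dots,n\}$, namely $\sum_{i=n-P_j+1}^{n}i$. Combining this trivial upper bound with the equality above yields the claimed inequality directly.

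There is no genuine obstacle here; the only point worth noting is the reason for picking the \emph{smallest} $j$ parts: this choice minimizes $P_j$, and since $\sum_{i=n-P_j+1}^{n}i$ is monotone increasing in $P_j$, taking the $j$ smallest prescribed sizes produces the tightest among all analogous inequalities one could derive by selecting any $j$ of the $A_i$'s.
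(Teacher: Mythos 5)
Your proof is correct and is essentially the paper's own argument, just spelled out in more detail: the paper's one-line justification is precisely that the left side is the sum of the $P_j$ largest labels while the right side is the required total over the $j$ smallest parts. Nothing further is needed.
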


\begin{proof} The left side is the sum of $P_j$ largest labels, while the right side is the desired sum of the labels in $j$ smallest cliques.
\end{proof}

It is worth mentioning that Miller, Rodger and Simanjuntak in \cite{MRS}, while considering distance magic labelings (where the sums are taken over the open neighborhoods $N_{G}(x)$), proved that the above necessary conditions are also sufficient for complete $k$-partite graphs for $k\in\{2,3\}$.

\section*{Acknowledgments}
We are very grateful to two anonymous Referees for detailed remarks that allowed to improve our paper.

%%%% Bibliography  %%%%%%%%%%

\end{document}